\documentclass[11pt,reqno]{amsart}

\usepackage{fourier}
\usepackage{fullpage}

\usepackage{amsmath}
\usepackage{mathtools}
\usepackage{amssymb}
\usepackage{amsthm}
\usepackage{mathrsfs}
\usepackage{enumerate}
\newcounter{name}

\usepackage{xcolor}
\usepackage{hyperref}

\newcommand{\R}{\mathbb{R}}

\newcommand{\T}{\mathbb{T}}

\makeatletter
\renewcommand*\env@cases[1][1.2]{%
  \let\@ifnextchar\new@ifnextchar
  \left\lbrace
  \def\arraystretch{#1}%
  \array{@{}l@{\quad}l@{}}%
}
\makeatother

\newtheorem{lemma}{Lemma}[section]
\newtheorem{theorem}[lemma]{Theorem}
\newtheorem{proposition}[lemma]{Proposition}

\newtheorem{conjecture}[lemma]{Conjecture}
\theoremstyle{definition}

\theoremstyle{remark}
\newtheorem{remark}[lemma]{Remark}
\newtheorem{question}[lemma]{Question}

\title{Relaxation dynamics of the Inertial Winfree model}

\author[Moreno-Earle]{Caiman Moreno-Earle}
\address[Caiman Moreno-Earle]{\newline Mathematics Department \newline California Institute of Technology, Pasadena California 91125, United States}
\email{cmorenoearle@caltech.edu}

\author[Ryoo]{Seung-Yeon Ryoo }
\address[Seung-Yeon Ryoo]{\newline Mathematics Department \newline California Institute of Technology, Pasadena California 91125, United States}
\email{sryoo@caltech.edu}

\author[To]{Grace To}
\address[Grace To]{\newline Mathematics Department \newline California Institute of Technology, Pasadena California 91125, United States}
\email{gto@caltech.edu}
\date{\today}

\begin{document}

\subjclass{34D06, 34C11, 34C15} \keywords{Winfree model, inertia, oscillator death, order parameter bootstrapping}

\begin{abstract}
We prove two synchronization theorems for the second-order (inertial) Winfree model of coupled oscillators. The first result is a pathwise oscillator-death theorem with explicit smallness thresholds on the natural frequencies, initial velocities, and inertia, scaling as $R_0^{3/2}$ in the initial order parameter $R_0$. The second result is a qualitative zero-inertia synchronization statement: under generic initial data, if the intrinsic and initial velocity spreads are small compared to $\kappa$ and the inertia $m$ is small, then the limiting order parameter can be made arbitrarily close to 2. The proof of the first result is organized around three mechanisms, namely inertial gradient flow and the {\L}ojasiewicz theorem, an initial layer argument, and an order-parameter bootstrapping argument. The proof of the second result involves approximation to the first-order case via a quantitative Tikhonov theorem.
\end{abstract}
\maketitle

\tableofcontents

\section{Introduction}\label{sec:intro}
\subsection{Formulation of the models and previous work}
\emph{Synchronization} refers to collective phenomena of many-body systems wherein some aspect, such as phase, position, or frequency, of the constituent particles or individuals coalesces. It is observed across a wide range of physical, biological, and engineering systems: following Huygens's seminal mid-seventeenth-century observation of two pendulum clocks hanging on a common bar \cite{oliveira2015huygens}, synchronous phenomena have been widely reported, including flocking in birds and fish \cite{flierl1999individuals,vicsek1995novel}, synchronous flashing of fireflies \cite{buck1966biology}, collective dynamics of cardiac pacemaker cells \cite{peskin1975mathematical}, and frequency synchronization in power networks \cite{chiang2011direct,kundur2007power,sauer2017power}. The systematic mathematical study of these phenomena began relatively recently with models such as the Winfree \cite{winfree1967biological} and Kuramoto \cite{kuramoto1975international} models of coupled oscillators and the Vicsek and Cucker--Smale models for flocking \cite{cucker2007emergent,vicsek1995novel}. The Winfree and Kuramoto models notably exhibit phase transitions from disordered (incoherent) states to partially locked and then to completely locked states as the \emph{coupling strength} exceeds certain critical thresholds \cite{ariaratnam2001phase,crawford1994amplitude,kuramoto2002coexistence}, a feature that has drawn sustained attention from the control theory, neuroscience, and statistical physics communities \cite{acebron2005kuramoto,bernoff2011primer,dorfler2011critical,ermentrout2019recent,ha2016collective,hoppensteadt2012weakly,rodrigues2016kuramoto,strogatz2000kuramoto}.

In this paper, we focus on the \emph{inertial Winfree model}, which to our knowledge was first introduced by \cite{ha2021emergent}. We will show below in Lemma \ref{lem:embedding}\footnote{This also shows that the first-order Winfree model embeds into the first-order Kuramoto model. We are not aware of prior observations of this fact.} that the inertial Winfree model embeds into the \emph{inertial Kuramoto model}, which is a second-order variant introduced by Arthur Bergen and David Hill \cite{bergen2007structure} to model electric networks with generators, and by Bard Ermentrout \cite{ermentrout1991adaptive} to model synchronous flashing of the firefly species Pteroptyx malaccae. Owing to its second-order nature, the inertial Kuramoto model exhibits novel features absent in the first-order Kuramoto model, including first-order phase transitions \cite{tanaka1997first} and hysteresis \cite{hong1999inertia,tanaka1997self}; by our embedding of Lemma \ref{lem:embedding}, these features carry over to the inertial Winfree model.

We now introduce the formal setup.
Let $N$ be the number of oscillators, and for each $1\le i\le N$, let
$\theta_i=\theta_i(t)\in\R$ denote the phase and $\omega_i=\dot\theta_i$
the instantaneous frequency of the $i$-th oscillator, where both are defined as real-valued functions of time $t\ge 0$. The evolution of the phase variables $\{\theta_i\}_{i=1}^N$ under the \emph{inertial
Winfree dynamics} is described by the Cauchy problem
\begin{equation}\label{InertWinfree}
    \begin{cases}
        m\ddot\theta_i + \dot{\theta}_i=\nu_i+\dfrac\kappa N \sum_{j=1}^N I(\theta_j(t))\,S(\theta_i(t)), & t>0, \\[2pt]
        (\theta_i,\dot\theta_i)\big\rvert_{t=0^+} =(\theta_i^0,\omega_i^0), &  i=1,\dots,N,
    \end{cases}
\end{equation}
where $I\colon\R\to\R$ (the \emph{influence function}) and
$S\colon\R\to\R$ (the \emph{sensitivity function}) are $2\pi$-periodic
Lipschitz functions, the constants $m\ge 0$ and $\kappa\ge 0$ denote
the (uniform) inertia and coupling strength, and $\nu_i\in\R$ denotes
the natural frequency of the $i$-th oscillator.

In comparison, the first-order Winfree model is formally obtained from
\eqref{InertWinfree} by setting $m=0$:\footnote{The formal justification of obtaining \eqref{GenWinfree} from \eqref{InertWinfree} is given in Proposition \ref{prop:mto0quant}.}
\begin{equation}\label{GenWinfree}
    \begin{cases}
        \dot{\theta}_i=\nu_i+\dfrac\kappa N \sum_{j=1}^N I(\theta_j(t))\,S(\theta_i(t)), & t>0, \\[2pt]
        \theta_i(0)=\theta_i^0, &  i=1,\dots,N.
    \end{cases}
\end{equation}
Both systems \eqref{InertWinfree} and \eqref{GenWinfree} admit unique
global solutions by Cauchy--Lipschitz theory.

In the prototypical case
\begin{equation}\label{standard}
    S(\theta)=-\sin\theta,\qquad I(\theta)=1+\cos\theta,
\end{equation}
\eqref{InertWinfree} becomes
\begin{equation}\label{Winfree}
    \begin{cases}
        m\ddot\theta_i + \dot{\theta}_i=\nu_i-\dfrac\kappa N \sum_{j=1}^N (1+\cos\theta_j(t))\sin\theta_i(t), & t>0, \\[2pt]
        (\theta_i,\dot\theta_i)\big\rvert_{t=0^+} =(\theta_i^0,\omega_i^0), & i=1,\dots,N.
    \end{cases}
\end{equation}
We restrict to the model \eqref{Winfree} throughout this paper. Define the
\emph{order parameter}
\begin{equation}\label{order_parameter}
R\coloneqq\frac1N\sum_{j=1}^N(1+\cos\theta_j)\in[0,2],
\end{equation}
so that \eqref{Winfree} takes the mean-field form
\begin{equation}\label{Winfree_orderparam}
    m\ddot\theta_i+\dot{\theta}_i=\nu_i-\kappa R(t)\sin\theta_i,\qquad i\in[N].
\end{equation}

\paragraph{{\bf Relation to prior work.}}
We organize some relevant prior work along two axes (the list is not exhaustive): (i) \emph{first vs.\ second order}, that is, with or without
inertia, and (ii) \emph{Kuramoto vs.\ Winfree}, that is, with pairwise
sinusoidal coupling $\sin(\theta_j-\theta_i)$ versus separable coupling
$I(\theta_j)S(\theta_i)$. The corresponding
$2\times 2$ landscape is as follows.

\medskip
\begin{center}
\renewcommand{\arraystretch}{1.6}
\begin{tabular}{@{}p{0.15\textwidth}|p{0.4\textwidth}|p{0.4\textwidth}@{}}
 & \textbf{Kuramoto} & \textbf{Winfree} \\ \hline
\textbf{First-order}
 & Asymptotic phase-locking under small frequency spread and generic initial data~\cite{ha2016emergence,ha2020asymptotic}.
 & Oscillator death via a gradient-flow/\L ojasiewicz argument, order parameter bootstrapping, and a volumetric argument; the critical coupling strength was also computed~\cite{ryoo2026oscillator}. \\ \hline
\textbf{Second-order (inertial)}
 & Phase-locking from restricted initial data via Lyapunov-functional methods~\cite{choi2014complete}; from generic initial data with explicit $R_0^2$-type scaling~\cite{cho2025quantitative}; quantitative higher-order Tikhonov theorem~\cite{cho2025inertia}.
 & Linear stability of equilibria~\cite{ha2021emergent}; oscillator locking\footnote{This is a notion different from oscillator death; it roughly means that the oscillators rotate with a nonzero common velocity. See \cite[Section 3.1]{ryoo2026oscillator} for a detailed discussion.} under restrictive initial conditions~\cite{kang2022emergence}. \textbf{This paper} fills the remaining gap by proving oscillator death from generic initial data in the small intrinsic and initial frequency, small-inertia regime. \\
\end{tabular}
\end{center}
\medskip

\subsection{Main theorems}
Our first main result is a pathwise oscillator-death theorem with
explicit smallness thresholds, in the \emph{zero-inertia, large coupling limit}. This is a counterpart of the Kuramoto-result \cite[Theorem 1.1]{cho2025quantitative} for the inertial Winfree model.

\begin{theorem}[Pathwise synchronization]\label{thm:pathwise}
Fix absolute constants $a=\frac{1}{50}$, $b=\frac{1}{80}$, and $c=\frac{1}{20}$. For any initial data
$(\{\theta_i^0\}_{i=1}^N,\{\omega_i^0\}_{i=1}^N)$ and system parameters $(\{\nu_i\}_{i=1}^N,\kappa,m)$
satisfying $R_0:=R(0)>0$ and
\begin{equation}\label{eq:smallness-1}
\frac{\max_i |\nu_i|}{\kappa}<a\,R_0^{3/2},\qquad
m\kappa<b\,R_0^{3/2},\qquad
\frac{\max_i |\omega_i^0|}{\kappa}<c\,R_0^{3/2},
\end{equation}
the solution $\{\theta_i(t)\}_{i=1}^N$ to \eqref{Winfree} satisfies the following:
\begin{enumerate}
    \item (Oscillator death) For every $i\in[N]$, the limits
    $\theta_i^\infty:=\lim_{t\to\infty}\theta_i(t)$ and
    $\lim_{t\to\infty}\dot\theta_i(t)=0$ exist.
    \item (Lower bound on the order parameter) We have
    \[
    \inf_{t\ge 0}R(t)\ge\frac{R_0}{4}.
    \]
\end{enumerate}
\end{theorem}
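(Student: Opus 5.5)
The proof combines three ingredients: an inertial gradient-flow structure with the {\L}ojasiewicz theorem, an initial-layer estimate, and an order-parameter bootstrap.

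\textbf{Gradient structure.} Write \eqref{Winfree} as $m\ddot\theta_i+\dot\theta_i=-N\,\partial_{\theta_i}V(\Theta)$ with
\[
V(\Theta)=-\frac1N\sum_i\nu_i\theta_i-\frac{\kappa}{2}\Bigl(\frac1N\sum_j(1+\cos\theta_j)\Bigr)^2\cdot N/N .
\]
Check this: $\partial_{\theta_i}\bigl(-\tfrac{\kappa N}{2}R^2\bigr)=\kappa R\sin\theta_i$, so $V=-\sum_i\nu_i\theta_i-\tfrac{\kappa N}{2}R^2$ gives $m\ddot\theta_i+\dot\theta_i=-\partial_iV$. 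This $V$ is real analytic. The energy $E=\tfrac m2\sum_i\dot\theta_i^2+V$ then satisfies $\dot E=-\sum_i\dot\theta_i^2\le 0$.

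The linear term $-\sum\nu_i\theta_i$ means $V$ is not bounded below on $\R^N$. It is, however, bounded below on any bounded set, and the phases will be shown to stay bounded. Once boundedness holds, the standard {\L}ojasiewicz convergence theorem for damped second-order analytic gradient systems (Haraux--Jendoubi) gives $\theta_i\to\theta_i^\infty$ and $\dot\theta_i\to0$. So item (1) reduces to proving $\sup_t|\theta_i(t)|<\infty$.

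\textbf{Initial layer.} On a short time window of order $m$ (say $[0,T_0]$ with $T_0\sim m\log(\cdot)$), use the variation-of-constants formula
\[
\dot\theta_i(t)=e^{-t/m}\omega_i^0+\frac1m\int_0^te^{-(t-s)/m}\bigl(\nu_i-\kappa R\sin\theta_i\bigr)\,ds .
\]
This gives $|\dot\theta_i|\le|\omega_i^0|+|\nu_i|+2\kappa$ on the window. Hence each phase moves by at most $O\bigl(m(|\omega_i^0|+|\nu_i|+\kappa)\bigr)$, which is $\ll R_0^{3/2}$ by the smallness assumptions on $m\kappa$ and $\omega_i^0/\kappa$. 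Since $R$ is $1$-Lipschitz in each phase, $R$ drops by only a small multiple of $R_0$ across the layer. After the layer the velocity has relaxed to near $\nu_i-\kappa R\sin\theta_i$ up to an $e^{-T_0/m}$ error.

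\textbf{Bootstrap.} This is the main obstacle. Let $T^*$ be the first time $R$ hits $R_0/4$; we aim to show $T^*=\infty$. Control the decay of $R$ through energy dissipation, since $V$ mostly consists of $-\tfrac{\kappa N}{2}R^2$. Integrating $\dot E=-\sum\dot\theta_i^2$ gives
\[
\tfrac{\kappa N}{2}R(t)^2\ \ge\ \tfrac{\kappa N}{2}R_0^2-\tfrac m2\sum_i(\omega_i^0)^2-\sum_i|\nu_i|\,|\theta_i(t)-\theta_i^0|-\int_0^t\sum_i\dot\theta_i^2 .
\]
This has the wrong sign for the dissipation term, so instead pair the drift term with dissipation by Cauchy--Schwarz:
\[
|\nu_i|\,|\theta_i(t)-\theta_i^0|\le|\nu_i|\sqrt t\Bigl(\int\dot\theta_i^2\Bigr)^{1/2}.
\]
The cleaner route is to restrict attention to oscillators in the attracting region. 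As in the first-order paper's bootstrapping, split the oscillators at a threshold: those with $1+\cos\theta_i\ge R_0/2$ and the rest. While $R\ge R_0/4$, the first-order-like drift $-\kappa R\sin\theta_i$ dominates $\nu_i$ whenever $|\sin\theta_i|\gtrsim\nu_i/(\kappa R_0)$. Combined with the damping, this traps each phase in a well around a sink $\theta\approx 0\pmod{2\pi}$ or near the saddle $\pi$, with no full rotations. The trapping also yields boundedness of the phases.

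The trapping is a second-order barrier argument. Consider the per-oscillator energy $\tfrac m2\dot\theta_i^2-\kappa R\cos\theta_i-\nu_i\theta_i$. The time dependence of $R$ enters only through $\dot R$, which is controlled by $\sum|\dot\theta_j|/N$. The exponent $R_0^{3/2}$ is expected to come from balancing three quantities:
\begin{itemize}
\item the well depth near the boundary phase where $1+\cos\theta\approx R_0$, so $|\sin\theta|\approx\sqrt{R_0}$;
\item the drift $\nu_i/\kappa$;
\item the inertial overshoot, where velocities of size $\kappa R_0\sqrt{R_0}$ must be dominated.
\end{itemize}

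One then verifies, using the explicit constants $a,b,c$, that the oscillators contributing to $R$ keep $1+\cos\theta_i\ge$ roughly half their layer-end value. This gives $R(t)>R_0/4$ on $[0,T^*)$, a contradiction unless $T^*=\infty$. With boundedness established, the {\L}ojasiewicz step concludes item (1). The delicate part is getting the explicit constants in this trapping argument to close.
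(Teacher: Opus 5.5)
Your architecture matches the paper's: initial layer, bootstrap on $R\ge R_0/4$, one-way trapping, boundedness, then Haraux--Jendoubi/{\L}ojasiewicz. The gap is the step you yourself flag as the main obstacle, the trapping barrier for the second-order dynamics. You never supply it, and the tool you propose for it does not work. For $E_i=\tfrac m2\dot\theta_i^2-\kappa R\cos\theta_i-\nu_i\theta_i$ one computes
\[
\dot E_i=-\dot\theta_i^2-\kappa\dot R\cos\theta_i .
\]
The second term is unsigned and of size up to $\kappa(\|\Omega^0\|_\infty+M_F)$ pointwise. It also has no a priori integrability on $[0,\infty)$: even granting bounded phases, dissipation only gives $\dot\theta\in L^2$ in time. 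So pointwise control of $\dot R$ cannot make $E_i$ a barrier uniformly in time.

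Your remark that after the layer $\dot\theta_i$ is near $\nu_i-\kappa R\sin\theta_i$ ``up to an $e^{-T_0/m}$ error'' is also inaccurate. The memory of the Duhamel kernel leaves a non-decaying error of order $m\kappa M_F$, and this is exactly the term that forces $m\kappa\lesssim R_0^{3/2}$.

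The missing idea is the paper's uniform-in-time order-lowering estimate (Lemma~\ref{lem:orderlowering}). The speed limit, plus the cancellation $|\cos\theta_i\cos\theta_j|+|\sin\theta_i\sin\theta_j|\le 1$, shows that $\frac{d}{dt}(R\sin\theta_i)$ is bounded by $2\|\Omega^0\|_\infty e^{-t/m}+2M_F$. Inserting this into Duhamel's formula gives, for all $t\ge0$,
\[
\bigl|\dot\theta_i-\omega_i^0e^{-t/m}-(\nu_i-\kappa R\sin\theta_i)(1-e^{-t/m})\bigr|\le 2m\kappa\bigl(\|\Omega^0\|_\infty e^{-1}+M_F\bigr).
\]
This turns the barrier into a first-order exit-time argument (Lemma~\ref{lem:single}). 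Take $x=-1+R_0/4$, $\rho=R_0/4$, and the first time $t_1\ge m$ with $\cos\theta_i(t_1)=y\in[-|x|,|x|]$. Then $\frac{d}{dt}\cos\theta_i(t_1)>0$, for two reasons:
\begin{itemize}
\item the restoring term $\kappa\rho|\sin\theta_i|(1-e^{-1})$, with $|\sin\theta_i|\ge\sqrt{1-x^2}\sim\sqrt{R_0}$, is of order $\kappa R_0^{3/2}$;
\item the errors $\|\Omega^0\|_\infty e^{-1}+\|\mathcal V\|_\infty+2m\kappa(\|\Omega^0\|_\infty e^{-1}+M_F)$ are $\lesssim(a+b+c)\kappa R_0^{3/2}$, so the restoring term wins.
\end{itemize}
This is where $R_0^{3/2}$ genuinely comes from.

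Since every level $y\in[-|x|,|x|]$ is a one-way barrier, each $i$ in the cluster $\mathcal A=\{i:\cos\theta_i(m)\ge-1+R_0/4\}$ retains
\[
1+\cos\theta_i(t)\ge\tfrac{1+|x|}{2}\bigl(1+\cos\theta_i(m)\bigr),
\]
a factor $1-R_0/8$. Combined with the counting bound $\frac1N\sum_{\mathcal A}(1+\cos\theta_i(m))\ge\frac{2(R(m)-\mu)}{2-\mu}$ for $\mu=R_0/4$, this closes the bootstrap at $R_0/4$. The same barriers give $\mathrm{osc}\,\theta_i<4\pi$ for every $i$, hence the boundedness needed for {\L}ojasiewicz.
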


\begin{remark}\,
\begin{enumerate}
	\item The constants $a,b,c$ are chosen to satisfy \eqref{eq:quant-check-1-abc} and \eqref{eq:quant-check-2-abc} below with comfortable margin. We do not claim their sharpness.
\item The exponent $3/2$ on $R_0$ is the analogue, for Winfree, of the
exponent $2$ that appears in the inertial Kuramoto pathwise theorem
of~\cite[Theorem 1.1]{cho2025quantitative}. It arises from the trapping mechanism in the argument of \S\ref{sec:pathwise-proof} (see in particular \eqref{eq:balance} and Remark~\ref{rmk:R032}).
\item For any $\varepsilon>0$,  one may improve statement (2) into $\inf_{t\ge 0}R(t)\ge (1-\varepsilon)R_0$ by taking the constants $a,b,c>0$ small enough depending on $\varepsilon$.
\item The parameters $\kappa, \nu_i, \omega_i^0$ have units of $\frac{1}{\mathrm{time}}$, whereas $m$ has units of $\mathrm{time}$, so \eqref{eq:smallness-1} is a dimensionless condition (and thus respects the time-dilatation symmetry of \cite[(2.7)]{cho2025quantitative}).
\end{enumerate}
\end{remark}

Our second main result is a qualitative synchronization result from generic initial data with better control on the limiting behavior of the order parameter. This is a counterpart of \cite[Theorem 1.2]{cho2025inertia}.

\begin{theorem}[Qualitative zero-inertia synchronization]\label{thm:qualitative_mto0}
Let $\{\theta_i^0\}_{i=1}^N\in\R^N$ satisfy $\theta_i^0\in (-\pi,\pi)$
for every $i\in[N]$, and let $\varepsilon>0$. Then
there exist positive numbers $a,b,c$, depending only on $\max_{i}|\theta_i^0|$ and
$\varepsilon$, such that if the initial velocities $\{\omega_i^0\}_{i=1}^N$ and
system parameters $(\{\nu_i\}_{i=1}^N,\kappa,m)$ satisfy
\begin{equation}\label{eq:smallness-2}
\frac{\max_i |\nu_i|}{\kappa}<a,\qquad
m\kappa<b,\qquad \frac{\max_i |\omega^0_i|}{\kappa}<c,
\end{equation}
then oscillator death occurs for the solution $\Theta(t)$ of
\eqref{Winfree} and
\[
\lim_{t\to\infty}R(t)>2-\varepsilon.
\]
\end{theorem}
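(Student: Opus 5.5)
We reduce to the first-order Winfree model \eqref{GenWinfree} via the quantitative Tikhonov-type approximation (Proposition~\ref{prop:mto0quant}), and then use the pathwise result Theorem~\ref{thm:pathwise} to pass to $t=\infty$. After rescaling time by $\kappa$, we may take $\kappa=1$. Then $\nu_i$ is replaced by $\nu_i/\kappa$, $m$ by $m\kappa$, and $\omega_i^0$ by $\omega_i^0/\kappa$, so all three smallness conditions in \eqref{eq:smallness-2} become absolute.

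\textbf{Step 1 (first-order dynamics with $\nu=0$).} Consider $\dot\phi_i=-R_\phi(t)\sin\phi_i$ with $\phi_i(0)=\theta_i^0\in(-\pi,\pi)$. Each phase moves monotonically toward $0$, so $|\phi_i|$ is nonincreasing and $R_\phi$ is nondecreasing with $R_\phi(0)\ge 1+\cos(\max_i|\theta_i^0|)=:2\rho_0>0$. Therefore $|\tan(\phi_i/2)|\le|\tan(\theta_i^0/2)|e^{-\rho_0 t}$. Hence there is a time $T$, depending only on $\max_i|\theta_i^0|$ and $\varepsilon$, such that $R_\phi(T)>2-\varepsilon/2$ and moreover $\max_i|\phi_i(T)|\le\delta$, with $\delta=\delta(\varepsilon)$ small.

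\textbf{Step 2 (approximation on $[0,T]$).} The first-order system with small $\nu$ is a Lipschitz perturbation of the $\nu=0$ flow, so by Gronwall it stays within $O(a e^{CT})$ of $\phi$ on $[0,T]$. Proposition~\ref{prop:mto0quant} then controls the difference between the inertial solution and the first-order one: on $[0,T]$ the phase error is $O((m\kappa)(1+\max|\omega_i^0|/\kappa)e^{CT})$, and after an initial layer of length $O(m)$ the velocities satisfy $\dot\theta_i\approx\nu_i-R\sin\theta_i$. Choosing $a,b,c$ small in terms of $T$ and $\delta$ gives the following at time $T$:
\[
\max_i|\theta_i(T)|\le 2\delta,\qquad R(T)>2-\varepsilon/2-O(\delta),\qquad \max_i|\dot\theta_i(T)|\le a+2\delta+o(1).
\]
The velocity bound uses the fact that at time $T$ we are well past the initial layer.

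\textbf{Step 3 (restart and pass to $t=\infty$).} Apply Theorem~\ref{thm:pathwise} with initial time $T$. Since $R(T)\approx 2$, the hypotheses \eqref{eq:smallness-1} at time $T$ hold provided $a,b,c,\delta$ are smaller than absolute multiples of $2^{3/2}$. This yields oscillator death, but only the bound $R\ge R(T)/4$, which is too weak. We therefore use the refinement in the Remark following Theorem~\ref{thm:pathwise}: shrinking the constants gives $\inf_{t\ge T}R(t)\ge(1-\varepsilon/4)R(T)>2-\varepsilon$ for small $\delta$ and $\varepsilon$.

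Alternatively, and more robustly, we can trap $|\theta_i|\le 3\delta$ for all $t\ge T$ directly. Near $0$ the restoring force is $-R\sin\theta_i\approx-2\theta_i$, which dominates $\nu_i$ once $a\ll\delta$. The damped oscillator $m\ddot\theta+\dot\theta=\nu-R\sin\theta$ with small initial energy cannot escape the interval. This can be checked with the energy $E_i=\tfrac m2\dot\theta_i^2+\int_0^{\theta_i}(R\sin s-\nu_i)\,ds$, handling the time dependence of $R$ by noting that $R$ stays near $2$ as long as the trapping holds, which closes the bootstrap. Trapping gives $\lim R\ge 1+\cos 3\delta>2-\varepsilon$.

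\textbf{Main obstacle.} The hardest part is Step 2, namely the uniform-in-$m$ control of velocities at time $T$ from Proposition~\ref{prop:mto0quant}, including the initial layer when $\omega_i^0$ is not compatible with the first-order vector field. The exponential dependence on $T$ is harmless because $T$ depends only on $\max|\theta_i^0|$ and $\varepsilon$. If Proposition~\ref{prop:mto0quant} only controls phases, I would instead bound $\dot\theta_i(T)$ using the variation-of-constants formula $\dot\theta_i(t)=e^{-t/m}\omega_i^0+\frac1m\int_0^te^{-(t-s)/m}(\nu_i-R\sin\theta_i)(s)\,ds$.
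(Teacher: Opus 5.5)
Your proof is correct, but it is assembled differently from the paper's.

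\textbf{First-order input.} You handle $\nu=0$ explicitly, via the monotone contraction of $\tan(\phi_i/2)$, and then perturb in $\nu$ by Gronwall. Your $\delta$ plays the role of the paper's $\alpha^\infty$. The paper instead applies Theorem~\ref{thm:HPR} from \cite{ha2015emergence} directly to the $\nu\neq0$ system. Your version is more self-contained. The paper's gives an explicit $T$ and, when $2\alpha^\infty<\alpha$, identifies the limit as the unique equilibrium in $[-\alpha,\alpha]^N$.

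\textbf{Final step.} This is the more substantive difference. The paper does not restart the dynamics at $T$. It applies Proposition~\ref{prop:pod} at $t_0=T$ with $\mathcal A=\mathcal B=[N]$, $\eta=1$, $\rho=2-\varepsilon$ and $x=\cos(2\alpha^\infty)$. Lemma~\ref{lem:single} and Proposition~\ref{prop:pod} rest on the Duhamel formula from time $0$ with the \emph{original} $\Omega^0$, and they only require $t_0\ge\eta m$. So the paper needs only the phase bound \eqref{eq:tik-phase} at time $T$. It never looks at $\dot\Theta(T)$ and needs no $\varepsilon$-refinement of Theorem~\ref{thm:pathwise}.

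Your restart needs two extra inputs:
\begin{enumerate}
\item[(i)] Smallness of $\|\dot\Theta(T)\|_\infty/\kappa$. This is supplied by \eqref{eq:tik-vel}, since the $e^{-T/m}$ term is negligible once $m\kappa<b\ll\kappa T$. So the step you flag as the main obstacle is already settled by Proposition~\ref{prop:mto0quant}.
\item[(ii)] Item (3) of the remark after Theorem~\ref{thm:pathwise}, which the paper only sketches in Remark~\ref{rmk:R032}. That refinement does hold, but its proof is precisely Proposition~\ref{prop:pod} with $\mu$ small and $\rho$ close to $R_0$.
\end{enumerate}
So your route reaches the same criterion less directly, at the cost of one extra Tikhonov estimate.

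\textbf{Energy alternative.} This is unnecessary and, as sketched, does not close. The identity $\frac{d}{dt}E_i=-\dot\theta_i^2+\dot R\,(1-\cos\theta_i)$ has an indefinite term driven by the other oscillators' velocities. The monotone total energy $\frac m2|\dot\Theta|^2+P(\Theta)$ starts with an excess of order $N\kappa\delta^2$, against single-oscillator escape barriers of order $\kappa$, so it cannot trap uniformly in $N$. The cosine barrier of Lemma~\ref{lem:single} is what gives the $N$-uniform trapping.
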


The qualitative statement of Theorem~\ref{thm:qualitative_mto0}
complements the pathwise statement of
Theorem~\ref{thm:pathwise}: Theorem~\ref{thm:pathwise} gives an
\emph{explicit} condition on the parameters that depends polynomially
on $R_0$, but only recovers $\inf R\ge R_0/4$, while
Theorem~\ref{thm:qualitative_mto0} delivers the stronger conclusion
$\lim_{t\to\infty} R(t)>2-\varepsilon$ at the cost of a possibly worse dependence on
$(\{\theta_i^0\}_{i=1}^N,\varepsilon)$. The proof of
Theorem~\ref{thm:qualitative_mto0} combines a quantitative Tikhonov
theorem (Proposition~\ref{prop:mto0quant}) with a first-order Winfree
input from~\cite{ryoo2026oscillator}; see \S\ref{sec:qualitative}.

\subsection{Notation and conventions}\label{sec:notation}
We denote $[N]=\{1,\dots,N\}$. We use capital Greek letters for the
$N$-tuples of the corresponding lower Greek letters, with $\mathcal V$
for the natural frequencies:
\[
\Theta\coloneqq(\theta_1,\dots,\theta_N),\qquad
\Omega\coloneqq(\omega_1,\dots,\omega_N),\qquad
\mathcal V\coloneqq(\nu_1,\dots,\nu_N).
\]
The symbol $\|\cdot\|_\infty$ denotes the $\ell^\infty$-norm on $\R^N$,
\[
\|\Theta\|_\infty\coloneqq\max_{i\in[N]}|\theta_i|,\qquad
\|\Omega\|_\infty\coloneqq\max_{i\in[N]}|\omega_i|,\qquad
\|\mathcal V\|_\infty\coloneqq\max_{i\in[N]}|\nu_i|.
\]
Given a subset $\mathcal{B}\subset [N]$, we write
\[
\Theta_\mathcal{B}\coloneqq(\theta_i)_{i\in \mathcal{B}},\qquad
\Omega_\mathcal{B}\coloneqq(\omega_i)_{i\in \mathcal{B}},\qquad
\mathcal V_\mathcal{B}\coloneqq(\nu_i)_{i \in \mathcal{B}},
\]
so that
\[
\|\Theta_\mathcal{B}\|_\infty\coloneqq\max_{i\in\mathcal{B}}|\theta_i|,\qquad
\|\Omega_\mathcal{B}\|_\infty\coloneqq\max_{i\in \mathcal{B}}|\omega_i|,\qquad
\|\mathcal V_\mathcal{B}\|_\infty\coloneqq\max_{i\in \mathcal{B}}|\nu_i|.
\]

We write $R_0:=R(\Theta^0)$ for the initial order parameter. For $\mathcal{B}\subset [N]$, we introduce the notation
\[
R_\mathcal{B}(\Theta)=\frac 1N\sum_{i\in \mathcal{B}}(1+\cos\theta_i),
\]
so that, for example, we have
\begin{equation}\label{eq:order-parameter-split}
R(\Theta) = R_{[N]}(\Theta)= R_\mathcal{B}(\Theta) + R_{[N]\setminus \mathcal{B}}(\Theta).
\end{equation}

Throughout this paper we write $\Theta(m,t)$ for the solution of
\eqref{Winfree} with inertia $m\ge0$ and initial position and velocity data $\Theta^0$ and $\Omega^0$, and $\Theta(0,t)$ for the solution
of \eqref{GenWinfree} with \eqref{standard} and initial position data $\Theta^0$.  If there is no confusion, $\Theta(t)$ will mean $\Theta(m,t)$. We write
$F\colon\R^N\to\R^N$ for the Winfree right-hand side,
\begin{equation}\label{def:F}
F_i(\Theta)\coloneqq\nu_i-\kappa R(\Theta)\sin\theta_i,\qquad i\in[N],
\end{equation}
so that \eqref{Winfree_orderparam} reads
$m\ddot\Theta+\dot\Theta=F(\Theta)$ and \eqref{GenWinfree}--\eqref{standard}
reads $\dot\Theta=F(\Theta)$. We set
\begin{equation}\label{def:MFLF}
M_F\coloneqq\|\mathcal V\|_\infty+2\kappa,
\end{equation}
which is an
$\ell^\infty$-bound on $F$ since $R\in [0,2]$. For $\mathcal{B}\subset [N]$, we set
\begin{equation}\label{def:MFB}
M_{F,\mathcal{B}} \coloneqq \|\mathcal{V}_\mathcal{B}\|_\infty +2\kappa,
\end{equation}
which is an $\ell^\infty$-bound on $F_i$ for $i\in \mathcal{B}$.

\subsection{Organization}
Section~\ref{sec:prelim} collects preliminary facts: the inertial
gradient-flow structure and the \L ojasiewicz theorem
(Proposition~\ref{prop:lojasiewicz}), Duhamel's principle and the resulting speed-control lemmas (Lemmas \ref{lem:speedlim} and \ref{lem:orderlowering}), and the initial layer control (Lemma \ref{lem:initlayer}).
Section~\ref{sec:tikhonov} develops the quantitative higher-order
Tikhonov theorem
(Proposition~\ref{prop:mto0quant}). Section~\ref{sec:pathwise-proof}
proves Theorem~\ref{thm:pathwise}: we establish a single-oscillator trapping
lemma (Lemma~\ref{lem:single}) and the main partial oscillator death
criterion (Proposition~\ref{prop:pod}), and assemble these into the
pathwise Theorem \ref{thm:pathwise} via an initial-layer / condensation / persistence
argument. Section~\ref{sec:qualitative} proves
Theorem~\ref{thm:qualitative_mto0} by using
a quantitative Tikhonov theorem (Proposition~\ref{prop:mto0quant}) and the first-order Winfree theorem
of~\cite{ha2015emergence}. Section~\ref{sec:volume} discusses possible volumetric arguments and the existence of a Lyapunov functional. Section~\ref{sec:conclusion} offers
concluding remarks and open problems.

\section{Preliminaries}\label{sec:prelim}

\subsection{Inertial gradient-flow structure and \L ojasiewicz theorem}

The inertial Winfree model~\eqref{Winfree} possesses a real-analytic
inertial gradient-flow formulation. Define the real-analytic potential
$P\colon\R^N\to\R$ by
\begin{equation}\label{def:P}
P(\Theta)\coloneqq-\sum_{k=1}^N\nu_k\theta_k-\frac{\kappa N}{2}R(\Theta)^2
=-\sum_{k=1}^N\nu_k\theta_k-\frac{\kappa}{2N}\Bigl(\sum_{k=1}^N(1+\cos\theta_k)\Bigr)^{\!2}.
\end{equation}
A direct computation yields
\[
\partial_{\theta_i}P
=-\nu_i-\kappa N R\cdot\Bigl(-\tfrac{1}{N}\sin\theta_i\Bigr)
=-\nu_i+\kappa R(\Theta)\sin\theta_i
=-F_i(\Theta),
\]
so that \eqref{Winfree_orderparam} takes the inertial gradient-flow form
\begin{equation}\label{eq:gradflow}
m\ddot\Theta+\dot\Theta\ =\ -\nabla_\Theta P(\Theta).
\end{equation}
We denote the set of critical
points of the real-analytic potential $P$ by
\[
\mathcal S\coloneqq\{(\Theta,{\mathbf 0})\in\R^{2N}\colon \nabla_\Theta P(\Theta)=0\},
\]
whose projection to the first factor is precisely the set of
\emph{death states}, i.e., equilibria of the first-order Winfree model
\eqref{GenWinfree}--\eqref{standard}.

The \L ojasiewicz gradient theorem (\cite{lojasiewicz1963propriete,
lojasiewicz1982trajectoires}) asserts that a bounded solution to a
first-order gradient flow of a real-analytic potential converges to a
critical point of the potential. This was extended to inertial
gradient flows in \cite[Theorem~1.1]{haraux1998convergence}
and~\cite[Corollary~5.1]{begout2015damped}, and was observed
in~\cite[Proposition~2.1]{choi2014complete} to apply to the inertial
Kuramoto model.

\begin{proposition}[\cite{haraux1998convergence,begout2015damped}]\label{prop:lojasiewicz}
Let $(\Theta,\Omega)$ be a global solution to~\eqref{Winfree} satisfying
the a priori uniform bound
\[
\|\Theta\|_{W^{1,\infty}}\coloneqq\|\Theta\|_{L^\infty[0,\infty)}+\|\Omega\|_{L^\infty[0,\infty)}<\infty.
\]
Then there exists $(\Theta^\infty,{\mathbf 0})\in\mathcal S$ such that
\[
\lim_{t\to\infty}\bigl(\|\Theta(t)-\Theta^\infty\|_\infty+\|\Omega(t)\|_\infty\bigr)=0.
\]
Moreover, the convergence is algebraic: there exist constants $c,C>0$ with
$\|\Theta(t)-\Theta^\infty\|_\infty\le C(1+t)^{-c}$ for all $t\ge 0$.
\end{proposition}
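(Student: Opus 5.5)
Since the statement is quoted from \cite{haraux1998convergence,begout2015damped}, the plan is to check that \eqref{Winfree} satisfies their hypotheses via \eqref{eq:gradflow}, and to record the standard Lyapunov/{\L}ojasiewicz argument behind it. The case $m=0$ is the classical first-order {\L}ojasiewicz theorem, so take $m>0$. We use the perturbed energy
\[
E(\Theta,\Omega)=\tfrac m2|\Omega|^2+P(\Theta),\qquad \tfrac{d}{dt}E=-|\Omega|^2,
\]
which follows directly from \eqref{eq:gradflow}. The a priori $W^{1,\infty}$ bound keeps the trajectory in a compact set $K\subset\R^{2N}$. On this set $E$ is bounded below, so $\int_0^\infty|\Omega|^2\,dt<\infty$. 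Also $\ddot\Theta=(-\nabla P-\Omega)/m$ is bounded, so $\Omega$ is uniformly continuous and hence $\Omega(t)\to0$. Consequently the $\omega$-limit set is a nonempty compact connected subset of $\mathcal S$, and $E$ takes a constant value $E_\infty$ on it (using $\Omega\to0$). The linear term $-\sum\nu_k\theta_k$ in $P$ causes no trouble, because $\Theta$ is bounded by hypothesis.

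Next, convergence, i.e.\ finite arclength. We apply the {\L}ojasiewicz inequality to the analytic function $P$: near each point of the $\omega$-limit set there are $\sigma,\delta>0$ and $\alpha\in(0,1/2]$ with $|P(\Theta)-P(\Theta^\ast)|^{1-\alpha}\le\,|\nabla P(\Theta)|$ for $|\Theta-\Theta^\ast|<\sigma$, and by compactness these constants can be made uniform. Following Haraux--Jendoubi, introduce the modified functional
\[
H(t)=E(\Theta,\Omega)+\varepsilon\langle\nabla P(\Theta),\Omega\rangle ,
\]
with $\varepsilon>0$ small. A computation using \eqref{eq:gradflow} and the boundedness of $\nabla^2P$ on $K$ gives
\[
-\dot H\ge c\,\bigl(|\Omega|^2+|\nabla P|^2\bigr),
\]
and {\L}ojasiewicz for $P$ gives $|H-E_\infty|^{1-\alpha}\le C(|\Omega|+|\nabla P|)$. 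Hence
\[
-\tfrac{d}{dt}(H-E_\infty)^\alpha\ge c'(|\Omega|+|\nabla P|)\ge c'|\dot\Theta|
\]
while the trajectory stays near the limit set. The usual argument (fix a limit point $\Theta^\infty$, pick a large time at which the trajectory is close to it, and show it cannot escape) then yields $\int^\infty|\dot\Theta|<\infty$. Therefore $\Theta(t)\to\Theta^\infty$, and $\nabla P(\Theta^\infty)=0$ because the limit lies in $\mathcal S$.

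For the rate, the same differential inequality gives $\dot{\tilde H}\le -c\,\tilde H^{2(1-\alpha)}$ with $\tilde H=H-E_\infty\ge0$. Integrating yields $\tilde H(t)\lesssim t^{-1/(1-2\alpha)}$ when $\alpha<1/2$, and exponential decay when $\alpha=1/2$. The tail bound $|\Theta(t)-\Theta^\infty|\le\int_t^\infty|\dot\Theta|\lesssim\tilde H(t)^\alpha$ then gives algebraic decay $C(1+t)^{-c}$. \textbf{The main obstacle} is the sign computation for $\dot H$: the cross term $\varepsilon\langle\nabla^2P\,\Omega,\Omega\rangle$ and the inertial term $-\tfrac{\varepsilon}{m}\langle\nabla P,\Omega\rangle$ must be absorbed by choosing $\varepsilon$ small relative to $m$ and $\sup_K\|\nabla^2P\|$. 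This is exactly the point handled in \cite[Theorem~1.1]{haraux1998convergence}, and we would cite it there.
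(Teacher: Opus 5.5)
Your proposal is correct and follows the same route as the paper, which gives no independent argument: it observes the inertial gradient-flow structure \eqref{eq:gradflow} with the real-analytic potential $P$ and invokes \cite[Theorem~1.1]{haraux1998convergence} and \cite[Corollary~5.1]{begout2015damped}; your sketch (energy dissipation, the Haraux--Jendoubi perturbed functional $E+\varepsilon\langle\nabla P,\Omega\rangle$, and the {\L}ojasiewicz inequality for $P$) is exactly the argument behind those citations. The step you flag as the main obstacle is routine: Young's inequality gives $-\dot H\ge\tfrac12|\Omega|^2+\tfrac{\varepsilon}{2m}|\nabla P|^2$ as soon as $\varepsilon\bigl(\sup_K\|\nabla^2P\|+\tfrac{1}{2m}\bigr)\le\tfrac12$.
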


\begin{remark}\label{rmk:velocity-bound}
By Lemma~\ref{lem:speedlim} below, $\|\Omega\|_{L^\infty[0,\infty)}<\infty$
automatically, so in order to apply
Proposition~\ref{prop:lojasiewicz} it suffices to verify
$\|\Theta\|_{L^\infty[0,\infty)}<\infty$. This is satisfied, e.g., if we know that each $\theta_i$
stays in a compact proper arc for large enough time.
\end{remark}

\begin{remark}
The potential~\eqref{def:P} only differs from the pairwise-coupling
potential
$P^K(\Theta)=-\sum_k\nu_k\theta_k+\tfrac{\kappa}{2N}\sum_{k,\ell}(1-\cos(\theta_k-\theta_\ell))$
used for inertial Kuramoto in~\cite{choi2014complete}. The application of \cite{haraux1998convergence,begout2015damped} is the same for the Winfree and Kuramoto systems.
\end{remark}
\begin{remark}
The Winfree potential is unbounded below
due to the linear drift $-\sum\nu_k\theta_k$, and $\mathbb{R}^N$ is noncompact. That is why we need boundedness of the trajectory (along with real-analyticity of $P$) in Proposition~\ref{prop:lojasiewicz}.

\end{remark}

\subsection{Duhamel's principle and the speed-limit lemma}\label{sec:duhamel}

We can rewrite~\eqref{Winfree_orderparam} as the $2N$-dimensional first-order
system
\begin{equation}\label{standardduhamel}
\begin{cases}
\dot\theta_i(t)=\omega_i(t),\\[2pt]
\dot\omega_i(t)=\dfrac{1}{m}\bigl(F_i(\Theta(t))-\omega_i(t)\bigr),
\end{cases}\qquad (\theta_i,\omega_i)\big|_{t=0^+}=(\theta_i^0,\omega_i^0),\quad i\in[N].
\end{equation}
Viewing the equation for $\omega_i$ as an inhomogeneous linear ODE in
$\omega_i$, Duhamel's principle gives
\begin{equation}\label{eq:duhamel}
\dot\theta_i(t)=\omega_i^0 e^{-t/m}+\frac{1}{m}\int_0^t e^{-(t-s)/m}F_i(\Theta(s))\,ds,
\end{equation}
and integrating once more, with Fubini,
\begin{equation}\label{eq:duhamel-theta}
\theta_i(t)=\theta_i^0+m\omega_i^0(1-e^{-t/m})+\int_0^t F_i(\Theta(s))\bigl(1-e^{-(t-s)/m}\bigr)\,ds.
\end{equation}

\begin{lemma}[Speed limit]\label{lem:speedlim}
For every solution to~\eqref{Winfree}, every $i\in[N]$, and every $t\ge 0$,
\[
|\dot\theta_i(t)|\le |\omega_i^0| e^{-t/m}+(|\nu_i|+2\kappa)(1-e^{-t/m})\le\|\Omega^0\|_\infty+M_F,
\]
and
\[
\Bigl|\frac{d}{dt}(R(t)\sin\theta_i(t))\Bigr|\le 2\|\Omega^0\|_\infty e^{-t/m}+2 M_F(1-e^{-t/m}).
\]

For $\mathcal{B}\subset [N]$ and $i\in \mathcal{B}$, we have
\[
\Bigl|\frac{d}{dt}(R_\mathcal{B}(t)\sin\theta_i(t))\Bigr|\le \frac{2|\mathcal{B}|}{N}\left(\|\Omega_\mathcal{B}^0\|_\infty e^{-t/m}+M_{F,\mathcal{B}}(1-e^{-t/m})\right).
\]
\end{lemma}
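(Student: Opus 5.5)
The plan is to read all three bounds off the Duhamel formula \eqref{eq:duhamel} together with the trivial bound $|F_i(\Theta)|\le |\nu_i|+2\kappa$. That bound holds because $R\in[0,2]$ and $|\sin\theta_i|\le 1$.

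\emph{First bound.} Take absolute values in \eqref{eq:duhamel}:
\[
|\dot\theta_i(t)|\le |\omega_i^0|e^{-t/m}+\frac{|\nu_i|+2\kappa}{m}\int_0^t e^{-(t-s)/m}\,ds
=|\omega_i^0|e^{-t/m}+(|\nu_i|+2\kappa)(1-e^{-t/m}).
\]
The right-hand side is a convex combination of $|\omega_i^0|$ and $|\nu_i|+2\kappa$. It is therefore at most $\max(\|\Omega^0\|_\infty,M_F)\le \|\Omega^0\|_\infty+M_F$. The case $m=0$ is read as the first-order equation, where the bound is immediate from $\dot\theta_i=F_i$.

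\emph{Second bound.} Differentiate by the product rule:
\[
\frac{d}{dt}(R\sin\theta_i)=\dot R\sin\theta_i+R\cos\theta_i\,\dot\theta_i,\qquad \dot R=-\frac1N\sum_j\sin\theta_j\,\dot\theta_j.
\]
Write $A(t)$ for the common bound $\|\Omega^0\|_\infty e^{-t/m}+M_F(1-e^{-t/m})$ on every $|\dot\theta_j|$, which the first bound gives uniformly in $j$. Then $|\dot R|\le A(t)$ and $|R\cos\theta_i\,\dot\theta_i|\le R\,A(t)$. This crude route gives $(1+R)A\le 3A$, which is weaker than the claimed $2A$.

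To get the factor $2$, I will not bound the two terms separately but combine them:
\[
\frac{d}{dt}(R\sin\theta_i)=\frac1N\sum_j\Bigl[-\sin\theta_j\sin\theta_i\,\dot\theta_j+(1+\cos\theta_j)\cos\theta_i\,\dot\theta_i\Bigr].
\]
For each $j$ I apply Cauchy--Schwarz to the pair of terms, viewed as the inner product of $(\sin\theta_j,\,1+\cos\theta_j)$ with $(-\sin\theta_i\dot\theta_j,\,\cos\theta_i\dot\theta_i)$. The second vector has norm at most $A$. The first has norm $\sqrt{2+2\cos\theta_j}\le 2$. So each summand is at most $2A$, and averaging over $j$ gives the claim.

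\emph{Localized bound.} For $\mathcal B$ and $i\in\mathcal B$, the same computation gives
\[
\frac{d}{dt}(R_\mathcal B\sin\theta_i)=\frac1N\sum_{j\in\mathcal B}\Bigl[-\sin\theta_j\sin\theta_i\,\dot\theta_j+(1+\cos\theta_j)\cos\theta_i\,\dot\theta_i\Bigr].
\]
Here there are only $|\mathcal B|$ summands, and only $\dot\theta_j$ with $j\in\mathcal B$ and $\dot\theta_i$ appear. Their bound comes from the first inequality with $|\omega_j^0|\le\|\Omega^0_\mathcal B\|_\infty$ and $|\nu_j|+2\kappa\le M_{F,\mathcal B}$. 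The same per-summand Cauchy--Schwarz estimate, summed over $\mathcal B$, yields the factor $2|\mathcal B|/N$.

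The only non-routine step is obtaining the constant $2$ rather than $3$. It rests on the pairing identity and on the identity $\sin^2\theta_j+(1+\cos\theta_j)^2=2(1+\cos\theta_j)\le 4$. If that pairing failed, I would fall back on the weaker constant.
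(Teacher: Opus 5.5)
Your proof is correct and follows the paper's route. The speed bound comes from Duhamel's formula with $|F_i|\le|\nu_i|+2\kappa$, and the derivative bounds come from the product rule for $R_\mathcal{B}\sin\theta_i$ applied summand by summand, with a Cauchy--Schwarz step giving the constant $2$ instead of the crude $3$. The only difference is minor: the paper first replaces every $|\dot\theta_j|$ by the common bound and then uses $|\cos\theta_i|+|\cos\theta_j\cos\theta_i|+|\sin\theta_j\sin\theta_i|\le 2$, whereas you pair $(\sin\theta_j,1+\cos\theta_j)$ against the velocity-weighted vector, which gives the slightly sharper per-summand factor $\sqrt{2(1+\cos\theta_j)}\le 2$.
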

\begin{proof}
The first bound follows term by term from~\eqref{eq:duhamel} (recalling \eqref{def:F}),
$|F_i|\le |\nu_i|+2\kappa \le M_F$, and
$\int_0^t e^{-(t-s)/m}\,ds=m(1-e^{-t/m})$.

We prove the third bound first; the second then follows. For the third bound, let
$M_\mathcal{B}(t):=\|\Omega_\mathcal{B}^0\|_\infty e^{-t/m}+M_{F,\mathcal{B}}(1-e^{-t/m})$. Then for $i\in \mathcal{B}$, we have $|\dot\theta_i(t)|\le M_\mathcal{B}(t)$, 
$|\tfrac{d}{dt}\sin\theta_i|=|\cos\theta_i|\,|\dot\theta_i|\le M_\mathcal{B}(t)|\cos\theta_i|$ and
\begin{equation}\label{eq:R-deriv}
	|\dot R_\mathcal{B}(t)|\le\frac1N\sum_{j\in \mathcal{B}}|\sin\theta_j|\,|\dot\theta_j|\le M_\mathcal{B}(t)\cdot\frac1N\sum_{j\in \mathcal{B}}|\sin\theta_j|,
\end{equation}
so that
\begin{align*}
	\Bigl|\frac{d}{dt}\bigl(R_\mathcal{B}(t)\sin\theta_i(t)\bigr)\Bigr|
	&\le R_\mathcal{B}(t)\,|\cos\theta_i(t)|\,|\dot\theta_i(t)|+|\sin\theta_i(t)|\,|\dot R_\mathcal{B}(t)|\\
	&\le M_\mathcal{B}(t)\Bigl(\frac1N\!\sum_{j\in \mathcal{B}}(1+\cos\theta_j)|\cos\theta_i|+\frac1N\!\sum_{j\in \mathcal{B}}|\sin\theta_j||\sin\theta_i|\Bigr)\\
	&\le\frac{M_\mathcal{B}(t)}{N}\sum_{j\in \mathcal{B}}\bigl(|\cos\theta_i|+|\cos\theta_j||\cos\theta_i|+|\sin\theta_j||\sin\theta_i|\bigr)\\
	&\le\frac{M_\mathcal{B}(t)}{N}\sum_{j\in \mathcal{B}} 2=\frac{2|\mathcal{B}|}{N}M_\mathcal{B}(t),
\end{align*}
using $|\cos\theta_j||\cos\theta_i|+|\sin\theta_j||\sin\theta_i|\le1$
by Cauchy--Schwarz.

The second follows from the third by setting $\mathcal{B}=[N]$.
\end{proof}

The next lemma upgrades the speed limit into a \emph{pointwise}
approximation of $\dot\theta_i(t)$ by its ``first-order-like'' part.

\begin{lemma}[Approximating second-order velocity by first-order]\label{lem:orderlowering}
For every solution to~\eqref{Winfree}, every $i\in[N]$, and every $t\ge 0$,
\begin{multline*}
\bigl|\dot\theta_i(t)-\omega_i^0 e^{-t/m}-\nu_i(1-e^{-t/m})+\kappa R(t)\sin\theta_i(t)(1-e^{-t/m})\bigr|\\
\le 2\kappa\,\|\Omega^0\|_\infty t e^{-t/m}(1-e^{-t/m})+2m\kappa M_F(1-e^{-t/m})^{3}.
\end{multline*}

If $\mathcal{B}\subset [N]$ and $i\in \mathcal{B}$, then
\begin{multline*}
	\bigl|\dot\theta_i(t)-\omega_i^0 e^{-t/m}-\nu_i(1-e^{-t/m})+\kappa R_\mathcal{B}(t)\sin\theta_i(t)(1-e^{-t/m})\bigr|\\
	\le 2\kappa (1-e^{-t/m})\left(\frac{|\mathcal{B}|}{N} \left(\|\Omega_\mathcal{B}^0\|_\infty te^{-t/m}+m M_{F,\mathcal{B}}(1-e^{-t/m})^2\right)+\frac{N-|\mathcal{B}|}{N} \right).
\end{multline*}
\end{lemma}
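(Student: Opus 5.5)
The plan is to start from Duhamel's formula \eqref{eq:duhamel} and write the term $\kappa R(t)\sin\theta_i(t)(1-e^{-t/m})$ as a Duhamel integral against the same kernel. We use $\frac1m\int_0^t e^{-(t-s)/m}\,ds=1-e^{-t/m}$ to absorb $\nu_i(1-e^{-t/m})$ and $\kappa R(t)\sin\theta_i(t)(1-e^{-t/m})$ into the integral. Writing $g(s):=R(s)\sin\theta_i(s)$, the quantity to be estimated becomes exactly
\[
\frac{\kappa}{m}\int_0^t e^{-(t-s)/m}\bigl(g(t)-g(s)\bigr)\,ds .
\]
We then control $|g(t)-g(s)|$ by integrating the second bound of Lemma~\ref{lem:speedlim} over $[s,t]$:
\[
|g(t)-g(s)|\le 2\|\Omega^0\|_\infty\, m\bigl(e^{-s/m}-e^{-t/m}\bigr)+2M_F (t-s)(1-e^{-t/m}),
\]
where in the second term we used the crude monotone bound $1-e^{-u/m}\le 1-e^{-t/m}$ for $u\le t$.

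Next we insert these two pieces into the kernel integral and evaluate explicitly, with $x=t/m$. For the $\Omega^0$ piece, $\frac{\kappa}{m}\int_0^t e^{-(t-s)/m}\,2\|\Omega^0\|_\infty m(e^{-s/m}-e^{-t/m})\,ds$ equals $2\kappa\|\Omega^0\|_\infty e^{-t/m}\bigl(t-m(1-e^{-t/m})\bigr)$. This is at most $2\kappa\|\Omega^0\|_\infty t e^{-t/m}(1-e^{-t/m})$, because the difference reduces to $x e^{-x}\le 1-e^{-x}$, i.e.\ $x\le e^x-1$. For the $M_F$ piece, $\frac{\kappa}{m}\int_0^t e^{-(t-s)/m}(t-s)\,ds=\kappa m\bigl(1-e^{-x}-xe^{-x}\bigr)$. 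This is at most $\kappa m(1-e^{-x})^2$ since $(1-e^{-x})^2-(1-e^{-x}-xe^{-x})=e^{-x}(x-1+e^{-x})\ge0$. Multiplying by $2M_F(1-e^{-t/m})$ gives the $2m\kappa M_F(1-e^{-t/m})^3$ term, and this proves the first estimate.

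For the $\mathcal B$-version we proceed the same way, now comparing $R_\mathcal B(t)\sin\theta_i(t)$ with $R(s)\sin\theta_i(s)=R_\mathcal B(s)\sin\theta_i(s)+R_{[N]\setminus\mathcal B}(s)\sin\theta_i(s)$ inside the Duhamel integral. The $\mathcal B$-part $R_\mathcal B(t)\sin\theta_i(t)-R_\mathcal B(s)\sin\theta_i(s)$ is handled exactly as above using the third bound of Lemma~\ref{lem:speedlim}. That bound carries the factor $|\mathcal B|/N$ and uses $\|\Omega^0_\mathcal B\|_\infty$ and $M_{F,\mathcal B}$, and it produces $2\kappa(1-e^{-t/m})\frac{|\mathcal B|}{N}\bigl(\|\Omega^0_\mathcal B\|_\infty te^{-t/m}+mM_{F,\mathcal B}(1-e^{-t/m})^2\bigr)$. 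The remaining part is bounded pointwise by $|R_{[N]\setminus\mathcal B}(s)\sin\theta_i(s)|\le 2(N-|\mathcal B|)/N$. Integrating against the kernel gives $2\kappa(1-e^{-t/m})\frac{N-|\mathcal B|}{N}$, as claimed. The only delicate points are the two elementary exponential inequalities above. In particular, one must use the exact value $m(e^{-s/m}-e^{-t/m})$ of $\int_s^t e^{-u/m}\,du$ rather than the cruder bound $(t-s)e^{-s/m}$, since the latter yields $t^2$ growth and fails to match the claimed bound for large $t$.
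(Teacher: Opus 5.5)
Your proposal is correct and follows essentially the paper's route. You start from Duhamel's formula, write the error as $\frac{\kappa}{m}\int_0^t e^{-(t-s)/m}\bigl(g(t)-g(s)\bigr)\,ds$, and for the $\mathcal B$-version bound the complement term pointwise. You then integrate the speed-limit bounds of Lemma~\ref{lem:speedlim} over $[s,t]$ and close with elementary exponential inequalities.

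The only difference is minor. You bound $1-e^{-u/m}\le 1-e^{-t/m}$ before integrating the $M_F$-term. The paper instead integrates that term exactly to get $m-2te^{-t/m}-me^{-2t/m}$ and then uses $1-2xe^{-x}-e^{-2x}\le(1-e^{-x})^3$. Both arrive at the same final bound.
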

\begin{proof}
	The first inequality follows from the second by setting $\mathcal{B}=[N]$. We prove the second inequality. By~\eqref{eq:duhamel}, we have the decomposition
\begin{align*}
\dot\theta_i(t)&=\omega_i^0 e^{-t/m}+\nu_i(1-e^{-t/m})-\frac{\kappa}{m}\int_0^t e^{-(t-s)/m}R(s)\sin\theta_i(s)\,ds\\
&\stackrel{\mathclap{\eqref{eq:order-parameter-split}}}{=}\omega_i^0 e^{-t/m}+\nu_i(1-e^{-t/m})-\frac{\kappa}{m}\int_0^t e^{-(t-s)/m}R_\mathcal{B}(s)\sin\theta_i(s)ds\\
&\qquad-\frac{\kappa}{m}\int_0^t e^{-(t-s)/m}R_{[N]\setminus\mathcal{B}}(s)\sin\theta_i(s)\,ds\\
&=\omega_i^0 e^{-t/m}+\nu_i(1-e^{-t/m}) - \kappa R_\mathcal{B}(t)\sin\theta_i(t)(1-e^{-t/m})\\
&\quad +\frac{\kappa}{m}\int_0^t e^{-(t-s)/m}(R_\mathcal{B}(t)\sin\theta_i(t)-R_\mathcal{B}(s)\sin\theta_i(s))ds\\
& \quad -\frac{\kappa}{m}\int_0^t e^{-(t-s)/m}R_{[N]\setminus\mathcal{B}}(s)\sin\theta_i(s)\,ds
\end{align*}
so that we may bound
\begin{align*}
	&\Bigl|\dot\theta_i(t)-\omega_i^0 e^{-t/m}-\nu_i(1-e^{-t/m})+\kappa R_\mathcal{B}(t)\sin\theta_i(t)(1-e^{-t/m})\Bigr|\\
	&\qquad\le \Bigl|\frac{\kappa}{m}\int_0^t\bigl(R_\mathcal{B}(t)\sin\theta_i(t)-R_\mathcal{B}(s)\sin\theta_i(s)\bigr)e^{-(t-s)/m}\,ds\Bigr|\\
	&\qquad\qquad +\left|\frac{\kappa}{m}\int_0^t e^{-(t-s)/m}R_{[N]\setminus\mathcal{B}}(s)\sin\theta_i(s)ds\right|\\
	&\qquad\le\frac{\kappa}{m}\int_0^t|R_\mathcal{B}(t)\sin\theta_i(t)-R_\mathcal{B}(s)\sin\theta_i(s)|e^{-(t-s)/m}\,ds\\
		&\qquad\qquad +\frac{N-|\mathcal{B}|}{N}2\kappa (1-e^{-t/m})
\end{align*}
where in the second inequality we used the triangle inequality, $R_{[N]\setminus\mathcal{B}}\le 2\frac{N-|\mathcal{B}|}{N}$, and $|\sin|\le 1$.

By the third bound of Lemma~\ref{lem:speedlim},
\begin{align*}
	|R_\mathcal{B}(t)\sin\theta_i(t)-R_\mathcal{B}(s)\sin\theta_i(s)|
	&\le 2\frac{|\mathcal{B}|}{N}\int_s^t\bigl(\|\Omega_\mathcal{B}^0\|_\infty e^{-\tau/m}+M_{F,\mathcal{B}}(1-e^{-\tau/m})\bigr)d\tau\\
	&=2\frac{|\mathcal{B}|}{N}\left(m\|\Omega_\mathcal{B}^0\|_\infty(e^{-s/m}-e^{-t/m})+M_{F,\mathcal{B}}(t-s-me^{-s/m}+me^{-t/m})\right).
\end{align*}
Substituting, integrating, and using the elementary inequalities
\[
x e^{-x}-e^{-x}+e^{-2x}\le x e^{-x}(1-e^{-x}),\quad
1-2xe^{-x}-e^{-2x}\le(1-e^{-x})^{3},\quad x\ge 0,
\]
(with $x=t/m$), we obtain
\begin{align*}
&\frac{\kappa}{m}\int_0^t|R_\mathcal{B}(t)\sin\theta_i(t)-R_\mathcal{B}(s)\sin\theta_i(s)|e^{-(t-s)/m}\,ds\\
	&\le 2\frac{|\mathcal{B}|}{N}\kappa \left(\|\Omega_\mathcal{B}^0\|_\infty e^{-t/m}\int_0^t \left(1-e^{-(t-s)/m}\right)ds+\frac{ M_{F,\mathcal{B}}}{m}\int_0^t(t-s-me^{-s/m}+me^{-t/m})e^{-(t-s)/m}ds\right)\\
	&=2\frac{|\mathcal{B}|}{N}\kappa\left(\|\Omega_\mathcal{B}^0\|_\infty e^{-t/m}\left(t-m+me^{-t/m}\right)+ M_{F,\mathcal{B}}\left(m-2te^{-t/m}-me^{-2t/m}\right)\right)\\
	&\le 2\frac{|\mathcal{B}|}{N}\kappa \left(\|\Omega_\mathcal{B}^0\|_\infty te^{-t/m}(1-e^{-t/m})+m M_{F,\mathcal{B}}(1-e^{-t/m})^3\right).
\end{align*}
Combining estimates, we get the stated result.
\end{proof}

\begin{lemma}[Initial layer effect on $R$]\label{lem:initlayer}
If $\delta\in(0,1)$ and $\eta>0$ satisfy
\[
m\|\Omega^0\|_\infty(1-e^{-\eta})+M_F\cdot m(\eta-1+e^{-\eta})<(1-\delta)R_0,
\]
then $R(t)>\delta R_0$ for $t\in [0,\eta m]$.
\end{lemma}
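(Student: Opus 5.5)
The plan is to control how far each phase can move on the initial layer $[0,\eta m]$, and then convert that displacement bound into a lower bound on $R$ using the fact that $1+\cos$ is $1$-Lipschitz.

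\textbf{Step 1: displacement bound.} By the first bound of Lemma~\ref{lem:speedlim}, for every $i\in[N]$ and $s\ge0$,
\[
|\dot\theta_i(s)|\le \|\Omega^0\|_\infty e^{-s/m}+M_F(1-e^{-s/m}).
\]
Integrating from $0$ to $t\le \eta m$ gives
\[
|\theta_i(t)-\theta_i^0|\le m\|\Omega^0\|_\infty(1-e^{-t/m})+M_F\bigl(t-m+me^{-t/m}\bigr).
\]
Both terms on the right are nondecreasing in $t$. The derivative of $t-m+me^{-t/m}$ is $1-e^{-t/m}\ge0$. So for $t\in[0,\eta m]$ the right-hand side is at most
\[
m\|\Omega^0\|_\infty(1-e^{-\eta})+M_F\, m(\eta-1+e^{-\eta}),
\]
which is strictly less than $(1-\delta)R_0$ by hypothesis.

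\textbf{Step 2: from displacement to $R$.} Since $|\cos x-\cos y|\le|x-y|$,
\[
R(t)\ge R_0-\frac1N\sum_{i}|\cos\theta_i(t)-\cos\theta_i^0|\ge R_0-\max_i|\theta_i(t)-\theta_i^0|>R_0-(1-\delta)R_0=\delta R_0 .
\]

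\textbf{Where the difficulty lies.} There is no real obstacle. The only points to check are the monotonicity in $t$, so that the bound at $t=\eta m$ controls the whole interval, and that the inequality stays strict. Both are immediate.

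If desired, Step 2 can be sharpened using a mean-value argument with $|\sin|$ weights, but the crude Lipschitz bound is exactly what matches the stated hypothesis.
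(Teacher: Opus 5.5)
Your proof is correct and is essentially the paper's argument. The paper bounds $|\dot R(t)|\le \|\Omega^0\|_\infty e^{-t/m}+M_F(1-e^{-t/m})$ directly via \eqref{eq:R-deriv} (using $|\sin|\le 1$) and integrates over $[0,\eta m]$; you instead integrate each $|\dot\theta_i|$ first and then use that cosine is $1$-Lipschitz, which is the same estimate taken in the opposite order.
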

\begin{proof}
By~\eqref{eq:R-deriv} in the proof of Lemma~\ref{lem:speedlim},
$|\dot R(t)|\le M(t)=\|\Omega^0\|_\infty e^{-t/m}+M_F(1-e^{-t/m})$,
so for $t\in [0,\eta m]$,
\begin{align*}
R(t)
&\ge R_0-\int_0^{t}M(s)\,ds\\
&\ge R_0-\int_0^{\eta m}M(s)\,ds\\
&=R_0-m\|\Omega^0\|_\infty(1-e^{-\eta})-M_F\cdot m(\eta-1+e^{-\eta})\\
&>\delta R_0. \qedhere
\end{align*}
\end{proof}

\section{Quantitative higher-order Tikhonov theorem}\label{sec:tikhonov}

We mentioned earlier in the introduction that solutions to \eqref{GenWinfree} with \eqref{standard} are formally obtained from those of \eqref{InertWinfree} when passing to the limit $m\to 0$. 
Tikhonov's classical theorem~\cite{tikhonov1952systems,vasil1963asymptotic} makes this precise: it asserts that, for fixed initial data, the solution $\Theta(m,t)$ of the
inertial Winfree problem converges, as $m\to 0$, to the solution
$\Theta(0,t)$ of the first-order Winfree problem~\eqref{GenWinfree}
with~\eqref{standard}, in the Fr\'echet topologies $C^0[0,\infty)$ and $C^1(0,\infty)$. The classical theorem is qualitative. In this section we
derive a \emph{quantitative} and \emph{higher-order} version, namely
Proposition~\ref{prop:mto0quant} below, giving explicit rates of
convergence in the topologies $C^0[0,\infty)$ and $C^\infty(0,\infty)$.
This is the Winfree analogue of~\cite[Proposition~3.2]{cho2025inertia}.

\subsection{Qualitative Tikhonov theorem}
Let us first restate the classical theorem for the reader's convenience.
\begin{proposition}[Classical Tikhonov theorem]\label{prop:mto0}
Fix initial data $(\Theta^0,\Omega^0)$, intrinsic velocities $\mathcal V$,
and a coupling strength $\kappa>0$. For each $m>0$, let $\Theta(m,t)$
denote the solution to~\eqref{Winfree} and $\Theta(0,t)$ the solution
to~\eqref{GenWinfree}--\eqref{standard}. Then for every $i\in[N]$,
\[
\lim_{m\to 0}\sup_{t\in[0,T]}|\theta_i(m,t)-\theta_i(0,t)|=0\quad(\forall T>0),
\]
and
\[
\lim_{m\to 0}\sup_{t\in[T_1,T_2]}|\omega_i(m,t)-\omega_i(0,t)|=0\quad(\forall T_2>T_1>0).
\]
\end{proposition}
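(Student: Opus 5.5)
The plan is a Gronwall comparison between the inertial trajectory and the first-order one. The bridge is the integrated Duhamel formula \eqref{eq:duhamel-theta}. Write $\Theta^m(t)=\Theta(m,t)$ and $\Theta^{(0)}(t)=\Theta(0,t)$, and recall that $F$ is globally Lipschitz in $\ell^\infty$ with some constant $L_F$ depending only on $\kappa$. One can take $L_F\le 4\kappa$: $R$ has Lipschitz constant $1$ in $\ell^\infty$, so $|R\sin\theta_i-R'\sin\theta_i'|\le |R-R'|+2|\theta_i-\theta_i'|$.

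\textbf{Step 1: position estimate.} From \eqref{eq:duhamel-theta},
\[
\theta_i^m(t)-\theta_i^0-\int_0^tF_i(\Theta^m(s))\,ds
= m\omega_i^0(1-e^{-t/m})-\int_0^tF_i(\Theta^m(s))e^{-(t-s)/m}\,ds .
\]
The right-hand side is bounded in absolute value by $m\|\Omega^0\|_\infty+mM_F$. Subtract the integral equation $\theta_i^{(0)}(t)=\theta_i^0+\int_0^tF_i(\Theta^{(0)}(s))\,ds$. This gives
\[
\|\Theta^m(t)-\Theta^{(0)}(t)\|_\infty\le m(\|\Omega^0\|_\infty+M_F)+L_F\int_0^t\|\Theta^m(s)-\Theta^{(0)}(s)\|_\infty\,ds .
\]
Gronwall's inequality then yields $\sup_{[0,T]}\|\Theta^m-\Theta^{(0)}\|_\infty\le m(\|\Omega^0\|_\infty+M_F)e^{L_FT}$, which tends to $0$ as $m\to0$. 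This is the first claim, with an explicit $O(m)$ rate.

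\textbf{Step 2: velocity estimate.} We have $\omega_i^{(0)}(t)=F_i(\Theta^{(0)}(t))$. The Duhamel formula \eqref{eq:duhamel}, rewritten as in the proof of Lemma~\ref{lem:orderlowering}, gives
\[
\Bigl|\omega_i^m(t)-F_i(\Theta^m(t))\Bigr|\le |\omega_i^0|e^{-t/m}+M_Fe^{-t/m}+\frac1m\int_0^te^{-(t-s)/m}\bigl|F_i(\Theta^m(t))-F_i(\Theta^m(s))\bigr|\,ds .
\]
In the last integral we use $|F_i(\Theta^m(t))-F_i(\Theta^m(s))|\le L_F(\|\Omega^0\|_\infty+M_F)(t-s)$, which follows from Lemma~\ref{lem:speedlim}. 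That integral is then $O(m)$. Hence
\[
|\omega_i^m(t)-\omega_i^{(0)}(t)|\le (\|\Omega^0\|_\infty+M_F)e^{-t/m}+O(m)+L_F\|\Theta^m(t)-\Theta^{(0)}(t)\|_\infty .
\]
On $[T_1,T_2]$ the term $e^{-t/m}\le e^{-T_1/m}$ vanishes as $m\to0$. The last term is controlled by Step~1.

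\textbf{Main obstacle.} There is essentially none. The only care needed is the initial layer: it prevents uniform velocity convergence near $t=0$, because $\omega^m(0)=\omega^0\ne F(\Theta^0)$ in general. This is exactly why the second claim is restricted to $T_1>0$, and the factor $e^{-T_1/m}$ handles it.
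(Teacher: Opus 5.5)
Your argument is correct, but it takes a different route from the paper. The paper proves this proposition in one line by citing Vasil'eva's form of Tikhonov's theorem. It only observes that, in the form \eqref{standardduhamel}, the fast variable $\omega$ relaxes to the globally stable root $\omega=F(\Theta)$ of the frozen fast equation. That gives the qualitative limits with no computation, but no rates, and it rests on verifying the hypotheses of an external singular-perturbation theorem.

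You instead use the fact that the fast equation is \emph{linear} in $\omega$, so Duhamel's formula is an exact representation. Combined with the global Lipschitz bound on $F$ and Gronwall, this gives a self-contained proof with explicit $O(m)$ rates. In effect you prove items (1)--(2) of Proposition~\ref{prop:mto0quant} directly. This is the ``adapting the Gronwall arguments'' route the paper mentions, but bypasses via the Kuramoto embedding and \cite{cho2025inertia}.

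Your route can even give a better exponential rate than the paper's. The row-sum computation in the proof of Lemma~\ref{lem:speedlim} shows that $F$ is $2\kappa$-Lipschitz in $\ell^\infty$, which is sharper than your $3\kappa$ or $4\kappa$. With that constant, your Step~1 yields growth $e^{2\kappa T}$ instead of the $e^{4\kappa t}$ that comes out of the embedding. This is exactly the rate the paper suspects is the correct one. Also, splitting off $\nu_i$ in Step~1 improves your prefactor to $m(\|\Omega^0-\mathcal V\|_\infty+2\kappa)$.
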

\begin{proof}
System~\eqref{Winfree} in the form~\eqref{standardduhamel} fits the
standard hypotheses of~\cite[Theorem~1.1]{vasil1963asymptotic}: the
``fast'' variable $\omega$ has a unique globally stable equilibrium
$\omega=F(\Theta)$ for each frozen $\Theta$, and the ``slow'' variable
$\Theta$ evolves via $\dot\Theta=\omega$. The conclusion follows.
\end{proof}

\begin{remark}
Proposition~\ref{prop:mto0} cannot give uniform-in-time bounds
(convergence in $L^\infty[0,\infty)$) because it is not true. Indeed, even for $N=1$, the two
systems $\dot\theta=1-\tfrac12(1+\cos\theta)\sin\theta$ starting at $\theta^0=0$ and
$\tfrac12\ddot\theta+\dot\theta=1-\tfrac12(1+\cos\theta)\sin\theta$
starting at $\theta^0=\dot\theta^0=0$ have phase difference growing
linearly in $t$. Nor does Proposition~\ref{prop:mto0} give convergence in $C^1$ on a
neighborhood of $t=0$, because again it is not true: the inertial system prescribes the arbitrary
value $\dot\theta_i(0)=\omega_i^0$, while the first-order system
mandates $\dot\theta_i(0)=F_i(\Theta^0)$. Any approximation of the
second-order by the first-order model in the $C^1$-topology, such as Lemma \ref{lem:orderlowering}, must
therefore be carried out after an initial time layer of the form
$[0,\eta m]$.
\end{remark}

The drawback of Tikhonov's theorem is that it does not give the quantitative bounds needed in this paper. By working with the ODEs of the Winfree model directly, we obtain explicit bounds as below in Proposition \ref{prop:mto0quant}, where we also show $C^\infty(0,\infty)$ convergence. One could verify these bounds directly by adapting the Gronwall arguments, but to streamline the exposition we import the results of \cite{cho2025inertia} for the Kuramoto model via an embedding of the Winfree system into the Kuramoto system.

\subsection{Embedding the Winfree model into the Kuramoto model}
\label{subsec:embedding}

The main observation of this section is that the first-order/inertial Winfree
system of $N$ oscillators is exactly an invariant submanifold of a
first-order/inertial Kuramoto system of $4N$ oscillators, obtained by adjoining
phase-reflected copies and a cluster of spectator oscillators frozen
at the origin. We are not aware of prior work that observes this fact. This reduces the quantitative Tikhonov problem for
Winfree to the one already treated in~\cite{cho2025inertia} for
Kuramoto, at the negligible cost of a constant factor in the rate.

\begin{lemma}[Winfree-to-Kuramoto embedding]\label{lem:embedding}
Let $\Theta(t)=(\theta_1,\dots,\theta_N)$ solve~\eqref{Winfree}
with natural frequencies $\mathcal V=(\nu_1,\dots,\nu_N)$, coupling
$\kappa$, inertia $m$, and initial data $(\Theta^0,\Omega^0)$. Define
$\Phi(t)=(\varphi_1,\dots,\varphi_{4N})$ by
\[
\varphi_i(t)=\theta_i(t),\quad \varphi_{N+i}(t)=-\theta_i(t),\quad
\varphi_{2N+k}(t)\equiv 0,\qquad i\in[N],\ k\in[2N],
\]
with natural frequencies
$\widetilde{\mathcal V}=(\nu_1,\dots,\nu_N,-\nu_1,\dots,-\nu_N,0,\dots,0)$,
coupling strength $\tilde\kappa=2\kappa$, inertia $m$, and initial
data $(\Phi^0,\dot\Phi^0)$ inherited from the embedding. Then $\Phi$
solves the inertial Kuramoto system
\begin{equation}\label{eq:Kur-embed}
m\ddot\varphi_k+\dot\varphi_k=\tilde\nu_k+\frac{\tilde\kappa}{4N}\sum_{l=1}^{4N}\sin(\varphi_l-\varphi_k),\qquad k\in[4N].
\end{equation}

Likewise, if $\Theta(t)$ solves the first-order Winfree system \eqref{GenWinfree} under \eqref{standard} with natural frequencies $\mathcal V=(\nu_1,\dots,\nu_N)$, coupling
$\kappa$, and initial data $\Theta^0$, then $\Phi$ defined as above with natural frequencies
$\widetilde{\mathcal V}=(\nu_1,\dots,\nu_N,-\nu_1,\dots,-\nu_N,0,\dots,0)$,
coupling strength $\tilde\kappa=2\kappa$, and initial
data $\Phi^0$ inherited from the embedding solves the first-order Kuramoto system
\begin{equation}\label{eq:Kur-first}
\dot\varphi_k=\tilde\nu_k+\frac{\tilde\kappa}{4N}\sum_{l=1}^{4N}\sin(\varphi_l-\varphi_k),\qquad k\in[4N].
\end{equation}
\end{lemma}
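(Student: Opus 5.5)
The plan is to verify the Kuramoto equation \eqref{eq:Kur-embed} directly, block by block, by computing the Kuramoto coupling term for the configuration $\Phi$. The first-order statement \eqref{eq:Kur-first} will then follow by the identical computation with $m=0$. Initial data are consistent by construction: $\varphi_i^0=\theta_i^0$, $\varphi_{N+i}^0=-\theta_i^0$, $\varphi_{2N+k}^0=0$, with velocities $\omega_i^0$, $-\omega_i^0$, $0$. Since solutions of \eqref{eq:Kur-embed} are unique by Cauchy--Lipschitz, it suffices to show that $\Phi$ satisfies the equation.

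The key computation is the sum over the $4N$ oscillators for a fixed index $k$:
\[
\sum_{l=1}^{4N}\sin(\varphi_l-\varphi_k)=\sum_{j=1}^N\bigl[\sin(\theta_j-\varphi_k)+\sin(-\theta_j-\varphi_k)\bigr]+2N\sin(-\varphi_k).
\]
By the sum-to-product identity, $\sin(\theta_j-\varphi)+\sin(-\theta_j-\varphi)=-2\cos\theta_j\sin\varphi$. The whole sum therefore equals $-2\sin\varphi_k\bigl(\sum_j\cos\theta_j+N\bigr)=-2N\,R\,\sin\varphi_k$. Multiplying by $\tilde\kappa/(4N)=\kappa/(2N)$ gives $-\kappa R\sin\varphi_k$. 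This is precisely where the spectator cluster of size $2N$ is needed: it supplies the constant ``$1$'' in $1+\cos\theta_j$, and the choices $\tilde\kappa=2\kappa$ and the count $4N$ make the normalization match.

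It remains to check each block.
\begin{itemize}
\item For $k=i\in[N]$, the right-hand side is $\nu_i-\kappa R\sin\theta_i$, which is the $i$-th equation of \eqref{Winfree_orderparam}.
\item For $k=N+i$, we have $\varphi_k=-\theta_i$, so the right-hand side is $-\nu_i+\kappa R\sin\theta_i$. The left-hand side is $-(m\ddot\theta_i+\dot\theta_i)$, so this block is the negation of the same equation.
\item For spectators, $\varphi_k\equiv0$ gives a right-hand side of $0+(-\kappa R\sin 0)=0$, consistent with $\varphi_k\equiv 0$.
\end{itemize}

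There is no real obstacle. The only care needed is bookkeeping of the normalization constants and the sign in the reflected block.
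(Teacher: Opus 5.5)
Your proposal is correct and follows essentially the same route as the paper: a direct block-by-block verification of the Kuramoto coupling sum via the identity $\sin(\theta_j-\varphi)+\sin(-\theta_j-\varphi)=-2\cos\theta_j\sin\varphi$, with the $2N$ pinned oscillators supplying the constant term in $1+\cos\theta_j$. Your formula $-\kappa R\sin\varphi_k$, valid for every $k$, handles in one computation the three cases the paper treats separately (it does $k=i$ explicitly, gets the reflected block by $\theta_i\mapsto-\theta_i$, and the spectators by odd symmetry).
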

\begin{proof}
We check each block separately. For $k=i\in[N]$, the Kuramoto right-hand
side equals
\begin{align*}
\nu_i+\frac{2\kappa}{4N}\sum_{l=1}^{4N}\sin(\theta_l-\varphi_i)
&=\nu_i-\frac{\kappa}{2N}\Bigl[\sum_j\sin(\theta_i-\theta_j)+\sum_j\sin(\theta_i+\theta_j)+2N\sin\theta_i\Bigr]\\
&=\nu_i-\frac{\kappa}{2N}\Bigl[2\sin\theta_i\sum_j\cos\theta_j+2N\sin\theta_i\Bigr]\\
&=\nu_i-\frac{\kappa}{N}\sin\theta_i\sum_{j=1}^N(1+\cos\theta_j),
\end{align*}
which matches the Winfree right-hand side. For
$k=N+i$ the computation is the same with $\theta_i\mapsto-\theta_i$
and $\nu_i\mapsto-\nu_i$, so the ansatz $\varphi_{N+i}=-\theta_i$ is
preserved. For $k=2N+j$, one checks that both the intrinsic frequency
and the coupling sum vanish by odd symmetry, so
$\varphi_{2N+j}\equiv 0$ is an invariant solution.
\end{proof}

\begin{remark}\label{rmk:diam-sup}
Because the embedded system contains $2N$ oscillators pinned at $0$,
the sup-norm $\|\Theta\|_\infty$ of the Winfree configuration equals
half the \emph{phase diameter} $\max_{k,l}|\varphi_k-\varphi_l|$ of the
embedded Kuramoto configuration. Hence any
diameter-based (equivalently, Galilean-invariant) norm bound for the
Kuramoto system \eqref{eq:Kur-embed} translates verbatim into a
sup-norm bound for the Winfree system~\eqref{Winfree}. The same holds
for velocities since $\dot\varphi_{2N+j}\equiv 0$.
\end{remark}

\subsection{The quantitative Tikhonov theorem}

We present a quantitative and higher-order version of Tikhonov's theorem, namely a $C[0,\infty)\cap C^\infty(0,\infty)$ convergence statement with explicit bounds. Using this, we will prove Theorem \ref{thm:qualitative_mto0} later in \S\ref{sec:qualitative} by a comparison argument.

\begin{proposition}[Quantitative higher-order Tikhonov theorem]\label{prop:mto0quant}
Fix initial data $(\Theta^0,\Omega^0)$, intrinsic velocities $\mathcal V$,
and a coupling strength $\kappa>0$. For each $m>0$, let $\Theta(m,t)$ denote the solution to \eqref{Winfree} with initial position $\Theta^0$ and initial velocity $\Omega^0$, and let $\Theta(0,t)$ denote the solution to \eqref{GenWinfree}--\eqref{standard}, with the same
initial position $\Theta^0$. Then:
\begin{enumerate}
\item \emph{(Phase convergence, $C^0[0,\infty)$)} For all $t\ge 0$,
\begin{equation}\label{eq:tik-phase}
\|\Theta(m,t)-\Theta(0,t)\|_\infty < \frac 12 m \Big(\|\Omega^0-\mathcal{V}\|_\infty+2\kappa \Big )e^{4\kappa t}.
\end{equation}

\item \emph{(Velocity convergence, $C^0(0,\infty)$)} For all $t\ge 0$,
\begin{multline}\label{eq:tik-vel}
	\|\dot\Theta(m,t)-\dot\Theta(0,t)\|_\infty<\frac 12\Big(\|\Omega^0-\mathcal{V}\|_\infty+2\kappa \Big)e^{-t/m}+ 2m\kappa(\|\mathcal{V}\|_\infty+2\kappa) +2m\kappa \Big (\|\Omega^0-\mathcal{V}\|_\infty+2\kappa\Big )e^{4\kappa t}.
\end{multline}

\item \emph{(Higher derivatives, $C^\infty(0,\infty)$)} For every
integer $n\ge 1$ and every $t\ge 0$,
\begin{equation}\label{eq:tik-higher}
\|\Theta^{(n)}(m,t)-\Theta^{(n)}(0,t)\|_\infty
\le C_n \left[m^{-n}\Bigl(1+\tfrac{t}{m}\Bigr)^{\!n}e^{-t/m}+m^{1-n}\kappa \Bigl(1+\tfrac{t}{m}\Bigr)^{\!n} e^{4\kappa t-t/m}\right]+C_n'm\kappa^{n+1} e^{4\kappa t}
\end{equation}
where $C_n$ depends polynomially on $m\kappa$,
$m\|\mathcal V\|_\infty$, $m\|\Omega^0\|_\infty$, and $C_n'$ depends polynomially on $\frac{\|\Omega^0\|_\infty}{\kappa},\frac{\|\mathcal{V}\|_\infty}\kappa$ (with the polynomial dependence depending on $n$).
\end{enumerate}
\end{proposition}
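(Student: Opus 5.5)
The plan is to reduce to the Kuramoto setting through Lemma~\ref{lem:embedding} and then quote the quantitative Tikhonov estimates of \cite[Proposition~3.2]{cho2025inertia}. As a fallback, I would redo the Gronwall argument directly on \eqref{Winfree}.

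\textbf{Step 1: embedding.} Given $(\Theta^0,\Omega^0,\mathcal V,\kappa,m)$, I embed both $\Theta(m,\cdot)$ and $\Theta(0,\cdot)$ into $4N$-oscillator Kuramoto systems $\Phi(m,\cdot)$ and $\Phi(0,\cdot)$. These have natural frequencies $\widetilde{\mathcal V}$, coupling $\tilde\kappa=2\kappa$ and initial data $(\Phi^0,\dot\Phi^0)$. By uniqueness of solutions, Lemma~\ref{lem:embedding} shows the embedded trajectories are the Kuramoto solutions. Since each coordinate of $\Phi$ is $\pm\theta_i$ or $0$, we have $\|\Phi^{(n)}(m,t)-\Phi^{(n)}(0,t)\|_\infty=\|\Theta^{(n)}(m,t)-\Theta^{(n)}(0,t)\|_\infty$ for every $n\ge0$. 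The relevant Kuramoto quantities also transfer: $\|\dot\Phi^0-\widetilde{\mathcal V}\|_\infty=\|\Omega^0-\mathcal V\|_\infty$ and $\|\widetilde{\mathcal V}\|_\infty=\|\mathcal V\|_\infty$. The Kuramoto coupling term is bounded by $\tilde\kappa=2\kappa$ and has Lipschitz constant $2\tilde\kappa=4\kappa$ in $\ell^\infty$. This is where $e^{4\kappa t}$ comes from, and I would check it matches the exponent in the cited result.

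\textbf{Step 2: $C^0$ and $C^1$ bounds.} As a self-contained check, set $e(t)=\Theta(m,t)-\Theta(0,t)$ and subtract \eqref{eq:duhamel-theta} from $\Theta(0,t)=\Theta^0+\int_0^tF(\Theta(0,s))\,ds$. This gives
\[
e(t)=m(\Omega^0-\mathcal V)(1-e^{-t/m})+\int_0^t\bigl(F(\Theta(m,s))-F(\Theta(0,s))\bigr)\,ds-\int_0^t \bigl(F(\Theta(m,s))-\mathcal V\bigr)e^{-(t-s)/m}\,ds.
\]
The last integral is bounded by $2\kappa m$, since $F-\mathcal V=-\kappa R\sin\theta$. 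The difference $F_i(\Theta)-F_i(\Theta')$ is Lipschitz in $\ell^\infty$ with constant $4\kappa$: the term $R\sin\theta_i$ has a derivative in each coordinate summing to at most $2+2$, up to the factor $\kappa$. Gronwall then yields \eqref{eq:tik-phase}, modulo the factor $\tfrac12$. I expect that factor to come from the sharper constants in \cite{cho2025inertia}, or from splitting $R$ more carefully.

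For \eqref{eq:tik-vel}, I use Lemma~\ref{lem:orderlowering} to compare $\dot\Theta(m,t)$ with $F(\Theta(m,t))$. The difference consists of the transient $(\Omega^0-F)e^{-t/m}$ plus an error of order $m\kappa M_F$. I then bound $|F(\Theta(m,t))-F(\Theta(0,t))|\le4\kappa\|e(t)\|_\infty$ and insert the phase bound. This produces exactly the three terms of \eqref{eq:tik-vel}: the transient, $2m\kappa M_F$, and $4\kappa$ times the phase bound.

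\textbf{Step 3: higher derivatives.} I differentiate $m\ddot\Theta+\dot\Theta=F(\Theta)$ $n-1$ times. The difference $E_n=\Theta^{(n)}(m)-\Theta^{(n)}(0)$ satisfies $m\dot E_n+E_n=G_n$, where $G_n$ collects Faà di Bruno terms in the lower derivatives together with the term $-m\,\Theta^{(n+1)}(0,t)$. The first-order derivatives satisfy $|\Theta^{(k)}(0,t)|\lesssim\kappa^{k-1}M_F$. Solving by Duhamel and inducting on $n$ gives the initial-layer terms $m^{-n}(1+t/m)^ne^{-t/m}$. Each integration against $e^{-(t-s)/m}$ of a term with polynomial factors raises the power of $(1+t/m)$. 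The forcing gives the $m\kappa^{n+1}e^{4\kappa t}$ term.

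I expect Step 3 to be the main obstacle: organizing the induction so that the constants depend only polynomially on the stated dimensionless quantities. Here I would first try to cite \cite[Proposition~3.2]{cho2025inertia} verbatim through the embedding, rather than redo the bookkeeping.
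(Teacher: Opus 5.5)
Your strategy is the paper's own: embed via Lemma~\ref{lem:embedding} and quote \cite[Proposition~3.2]{cho2025inertia}. Your Gronwall fallback reproduces the same bound.

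The genuine gap is the one you flagged yourself, the missing factor $\frac12$ in \eqref{eq:tik-phase} and \eqref{eq:tik-vel}. Sharper constants cannot close it, because parts (1) and (2) are false as stated. Take $N=1$, $\theta^0=0$, $\nu=0$, $\omega^0=1$, $m=1$, $\kappa=0.01$. Then $\theta(0,t)\equiv 0$. By \eqref{eq:duhamel-theta} and $|F|\le 2\kappa$ we get $\theta(1,2)\ge 1-e^{-2}-4\kappa>0.82$. The right-hand side of \eqref{eq:tik-phase} at $t=2$ is $\frac12(1.02)e^{0.08}<0.56$. Likewise, at $t=0$ the velocity gap is $|\omega^0-F(\theta^0)|=1$, while the right-hand side of \eqref{eq:tik-vel} is about $0.53$. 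More generally, as $\kappa\to0$ the phase error tends to $m(\Omega^0-\mathcal V)(1-e^{-t/m})$. Its supremum in $t$ is $m\|\Omega^0-\mathcal V\|_\infty$, twice the claimed bound.

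The paper gets the $\frac12$ from \eqref{eq:equal-norms}, which confuses the sup-norm with the phase diameter of Remark~\ref{rmk:diam-sup}. The coordinates of $\Phi^{(n)}(m,t)-\Phi^{(n)}(0,t)$ are $\pm(\theta_i^{(n)}(m,t)-\theta_i^{(n)}(0,t))$ and $0$. So its sup-norm equals the Winfree one, exactly as you wrote; only its diameter is twice as large.

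For the embedded data, the midrange term in \cite[Proposition~3.2]{cho2025inertia} vanishes and $\mathcal D(\widetilde{\Omega}^0-\widetilde{\mathcal V})=2\|\Omega^0-\mathcal V\|_\infty$. Hence what the citation, and your Gronwall argument, actually yield is
\[
\|\Theta(m,t)-\Theta(0,t)\|_\infty<m\bigl(\|\Omega^0-\mathcal V\|_\infty+2\kappa\bigr)e^{4\kappa t}.
\]
They also yield \eqref{eq:tik-vel} with every constant doubled, i.e.\ with right-hand side $(\|\Omega^0-\mathcal V\|_\infty+2\kappa)e^{-t/m}+4m\kappa(\|\mathcal V\|_\infty+2\kappa)+4m\kappa(\|\Omega^0-\mathcal V\|_\infty+2\kappa)e^{4\kappa t}$. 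Part (3) is unaffected, since $C_n,C_n'$ are unspecified.

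You should prove this corrected statement rather than hunt for the $\frac12$. It is all that Step~2 of Section~\ref{sec:qualitative} needs, since there only smallness of $b$ matters.

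One small point in your fallback for (2): Lemma~\ref{lem:orderlowering} also produces the term $2\kappa\|\Omega^0\|_\infty te^{-t/m}(1-e^{-t/m})\le 2e^{-1}m\kappa\|\Omega^0\|_\infty$, which is not of the form $m\kappa M_F$. It can be absorbed into the doubled bound. For this, use that the $\ell^\infty$-Lipschitz constant of $F$ is in fact at most $3\kappa$, together with $\|\Omega^0\|_\infty\le\|\Omega^0-\mathcal V\|_\infty+\|\mathcal V\|_\infty$. Still, the three terms do not come out ``exactly''.
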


\begin{proof}
	Lemma~\ref{lem:embedding} reduces the proof to a direct application
	of~\cite[Proposition~3.2]{cho2025inertia}. We spell out the reduction.

	Fix the Winfree initial data $(\Theta^0,\Omega^0)$ and let
	$(\Phi^0,\dot\Phi^0)\in\R^{4N}\times\R^{4N}$ be the embedded Kuramoto
	initial data defined by Lemma~\ref{lem:embedding}. Write $\Phi(m,t)$
	and $\Phi(0,t)$ for the solutions of the second-order Kuramoto
	system~\eqref{eq:Kur-embed} and \eqref{eq:Kur-first} respectively, both starting from
	$\Phi^0$ (for the first-order system, use $\Phi^0$ as the initial
	phase; for the second-order system, use $(\Phi^0,\dot\Phi^0)$), with $\tilde\kappa = 2\kappa$.
	By Lemma~\ref{lem:embedding}, for both $m>0$ and $m=0$ the components of
	$\Phi$ satisfy $\varphi_i=\theta_i$, $\varphi_{N+i}=-\theta_i$, and
	$\varphi_{2N+k}\equiv 0$ for all $t\ge 0$. In particular, similar to
	Remark~\ref{rmk:diam-sup},
	\begin{equation}\label{eq:equal-norms}
		\|\Theta^{(n)}(m,t)-\Theta^{(n)}(0,t)\|_\infty=\frac 12\|\Phi^{(n)}(m,t)-\Phi^{(n)}(0,t)\|_{\infty}.
	\end{equation}
	
	By \cite[Proposition 3.2]{cho2025inertia}, we have (denoting $\mathcal{D}(X)\coloneqq \max_{i,j\in [4N]}|x_i-x_j|$ for $X\in \mathbb{R}^{4N}$):
\begin{enumerate}
	\item (Convergence of $\Phi(m, \cdot)$):
	\begin{align*}
			& \|\Phi(m,t)-\Phi(0,t)\|_\infty \\
			&\hspace{1cm} < m\left|\frac{\max_i(\tilde{\omega}_i^0-\tilde{\nu}_i)+\min_i(\tilde{\omega}_i^0-\tilde{\nu}_i)}2\right|  +\frac 12 m \Big(\mathcal{D} (\tilde{\Omega}^0-\tilde{\mathcal{V}})+2\tilde{\kappa} \Big )e^{2\tilde{\kappa} t}\\
			&\hspace{1cm}=0+m \Big(\|\Omega^0-\mathcal{V}\|_\infty+2\kappa \Big )e^{4\kappa t}
	\end{align*}
	so that by \eqref{eq:equal-norms},
	\[
	\|\Theta(m,t)-\Theta(0,t)\|_\infty < \frac 12 m \Big(\|\Omega^0-\mathcal{V}\|_\infty+2\kappa \Big )e^{4\kappa t}.
	\]
	\item 
	(Convergence of ${\dot \Phi}(m, \cdot)$):
	\begin{align*}
			&\|\dot\Phi(m,t)-\dot\Phi(0,t)\|_\infty \\
			&\hspace{1cm} < \Big(\|\tilde{\Omega}^0-\tilde{\mathcal{V}}\|_\infty+\tilde{\kappa} \Big)e^{-t/m}+ m\tilde{\kappa}(\mathcal{D}(\tilde{\mathcal{V}})+2\tilde{\kappa}) +m\tilde{\kappa} \Big (\mathcal{D}(\tilde{\Omega}^0-\tilde{\mathcal{V}})+2\tilde{\kappa} \Big )e^{2\tilde{\kappa} t}\\
			&\hspace{1cm} =\Big(\|\Omega^0-\mathcal{V}\|_\infty+2\kappa \Big)e^{-t/m}+ 4m\kappa(\|\mathcal{V}\|_\infty+2\kappa) +4m\kappa \Big (\|\Omega^0-\mathcal{V}\|_\infty+2\kappa\Big )e^{4\kappa t}
	\end{align*}
	so that by \eqref{eq:equal-norms},
	\[
	\|\dot\Theta(m,t)-\dot\Theta(0,t)\|_\infty<\frac 12\Big(\|\Omega^0-\mathcal{V}\|_\infty+2\kappa \Big)e^{-t/m}+ 2m\kappa(\|\mathcal{V}\|_\infty+2\kappa) +2m\kappa \Big (\|\Omega^0-\mathcal{V}\|_\infty+2\kappa\Big )e^{4\kappa t}.
	\]
	\item (Convergence of $\Phi^{(n)}(m, \cdot)$ with $n \geq 2$):
	For $n\ge 2$, we have
	\begin{align*}
		&\|\Phi^{(n)}(m,t)-\Phi^{(n)}(0,t)\|_\infty\\
		& \hspace{0.5cm} \le (n-1)!\left(2\tilde{\kappa}+\|\tilde{\Omega}^0\|_\infty+\|\tilde{\mathcal{V}}\|_\infty+\frac{9}{8m}\right)^{n}\left(1+\frac tm\right)^{n}e^{-t/m}\\
		&  \hspace{0.7cm} + \frac 98m\tilde{\kappa} \cdot  n!e^{2\tilde{\kappa}t}\left(2\tilde{\kappa}+\mathcal{D}(\tilde{\Omega}^0)+\mathcal{D}(\tilde{\mathcal{V}})+\frac{9}{8m}\right)^{n}\left(1+\frac tm\right)^{n}e^{-t/m}\\
		& \hspace{0.7cm} + \frac 34m\tilde{\kappa} \cdot  (n+1)!e^{2\tilde{\kappa} t} \Big(2\tilde{\kappa}+\mathcal{D}(\tilde{\Omega}^0)+\mathcal{D}(\tilde{\mathcal{V}}) \Big)^{n}(1-e^{-t/m})\\
		& \hspace{0.5cm} \le (n-1)!\left(4\kappa+\|\Omega^0\|_\infty+\|\mathcal{V}\|_\infty+\frac{9}{8m}\right)^{n}\left(1+\frac tm\right)^{n}e^{-t/m}\\
		&  \hspace{0.7cm} + 9m\kappa \cdot  n!2^{n-2}e^{4\kappa t}\left(2\kappa+\|\Omega^0\|_\infty+\|\mathcal{V}\|_\infty+\frac{9}{16m}\right)^{n}\left(1+\frac tm\right)^{n}e^{-t/m}\\
		& \hspace{0.7cm} + 3m \kappa \cdot  (n+1)!2^{n-1}e^{4\kappa t} \Big(2\kappa+\|\Omega^0\|_\infty+\|\mathcal{V}\|_\infty \Big)^{n}(1-e^{-t/m}),
	\end{align*}
	so that
		\begin{align*}
		&\|\Theta^{(n)}(m,t)-\Theta^{(n)}(0,t)\|_\infty\\
		&\hspace{0.5cm}\stackrel{\mathclap{\eqref{eq:equal-norms}}}{\le} \frac 12\|\Phi^{(n)}(m,t)-\Phi^{(n)}(0,t)\|_\infty\\
		& \hspace{0.5cm} \le \frac 12(n-1)!m^{-n}\left(4m\kappa+m\|\Omega^0\|_\infty+m\|\mathcal{V}\|_\infty+\frac{9}{8}\right)^{n}\left(1+\frac tm\right)^{n}e^{-t/m}\\
&  \hspace{0.7cm} + 9m\kappa \cdot  n!2^{n-3}e^{4\kappa t}m^{-n}\left(2m\kappa+m\|\Omega^0\|_\infty+m\|\mathcal{V}\|_\infty+\frac{9}{16}\right)^{n}\left(1+\frac tm\right)^{n}e^{-t/m}\\
& \hspace{0.7cm} + 3m \kappa \cdot  (n+1)!2^{n-2}e^{4\kappa t} \kappa^n \Big(2+\frac{\|\Omega^0\|_\infty}{\kappa}+\frac{\|\mathcal{V}\|_\infty}\kappa \Big)^{n}(1-e^{-t/m}).
	\end{align*}
	One can check that this is in the stated form.
\end{enumerate}
\end{proof}

The proper exponential growth rate is likely given by $2\kappa$ as in the case for the Kuramoto model in \cite{cho2025inertia}, and our proof via the embedding likely introduces an unnecessary factor of 2 in the exponent. However,  this does not
affect the qualitative form of the bounds, so we do not pursue this optimization here.

\begin{remark}
The same embedding does \emph{not} reduce
Theorems~\ref{thm:pathwise} or~\ref{thm:qualitative_mto0} to the
corresponding Kuramoto results of~\cite{cho2025inertia,cho2025quantitative}. Theorem \ref{thm:pathwise} is not a simple reduction of \cite[Theorem 1.1]{cho2025quantitative} because $R_0^{3/2}$ cannot be bounded by $R_0^{2}$, and Theorem \ref{thm:qualitative_mto0} is not a simple reduction of \cite[Theorem 1.2]{cho2025inertia} because the limiting order parameter there is not strong enough (this is because, in Section \ref{sec:qualitative}, the first-order Winfree
result \cite{ha2015emergence}, convergence of the
first-order trajectory to a death state, has no Kuramoto analogue).
Consequently, while Proposition~\ref{prop:mto0quant} is essentially a
corollary of~\cite[Proposition~3.2]{cho2025inertia},
Theorems~\ref{thm:pathwise} and~\ref{thm:qualitative_mto0} require
genuinely Winfree-specific arguments, which are carried out in
Sections~\ref{sec:pathwise-proof} and~\ref{sec:qualitative}.
\end{remark}

\section{Partial oscillator death and proof of Theorem~\ref{thm:pathwise}}
\label{sec:pathwise-proof}

Throughout this section $\Theta(t)$ denotes the solution
to~\eqref{Winfree}, and we write $R(t)=R(\Theta(t))$. The proof of
Theorem~\ref{thm:pathwise} is organized into three stages, namely initial
layer, condensation, and persistence, in direct parallel with the
treatment of the inertial Kuramoto model
in~\cite{cho2025quantitative}. The technical backbone consists of a \emph{partial oscillator death} result (Proposition \ref{prop:pod}), which we prove via a \emph{partial trapping lemma} (Lemma~\ref{lem:single}).

\subsection{A priori partial trapping}

We first begin by proving that under an a priori lower bound on the order parameter, one can control a partial cluster. Below, $\mathcal{B}\subset [N]$ denotes a subset of oscillators which we wish to control.

\begin{lemma}[A priori partial trapping]\label{lem:single}
Let $\mathcal{B}\subset [N]$, $\eta>0$, $\rho>0$, and $T\in (\eta m,\infty]$, and suppose $R_\mathcal{B}(t)\ge\rho$ for
$t\in[\eta m,T)$. Let $x\in (-1,1)$ satisfy
\begin{equation}\label{eq:xbound}
1-x^2\ge\left(\frac{\|\Omega_\mathcal{B}^0\|_\infty e^{-\eta}+\|\mathcal V_\mathcal{B}\|_\infty+2\kappa\left(\frac{|\mathcal{B}|}{N}m\left(\|\Omega_\mathcal{B}^0\|_\infty(\eta\vee 1)e^{-(\eta\vee 1)}+ M_{F,\mathcal{B}}\right)+\frac{N-|\mathcal{B}|}{N}\right)}{\kappa\rho(1-e^{-\eta})}\right)^{\!2},
\end{equation}
where $z\vee w\coloneqq \max\{z,w\}$.
Let $y\in[-|x|,|x|]$.
\begin{enumerate}
\item If $i\in \mathcal{B}$ is such that $\cos\theta_i(t_0)\ge y$ for
some $t_0\in[\eta m,T)$, then $\cos\theta_i(t)\ge y$ for all
$t\in[t_0,T)$. In particular, $\mathrm{osc}_{[\eta m,T)} \theta_i<2\pi$, where $\mathrm{osc}_{I}f(t)\coloneqq \sup_{t\in I}f(t)-\inf_{t\in I} f(t)$ for an interval $I$ and a function $f:I\to \R$.
\item For all $i\in \mathcal{B}$, we have $\mathrm{osc}_{[\eta m,T)} \theta_i<4\pi$.
\end{enumerate}
\end{lemma}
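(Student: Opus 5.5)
The strategy is to use the second estimate of Lemma~\ref{lem:orderlowering} to turn the second-order velocity into a first-order-like drift with a small error, and then run a barrier argument on $\cos\theta_i$. Fix $i\in\mathcal{B}$ and $t\in[\eta m,T)$. Write
\[
\dot\theta_i(t)=-\kappa R_\mathcal{B}(t)\sin\theta_i(t)(1-e^{-t/m})+E_i(t).
\]
Lemma~\ref{lem:orderlowering}, together with $|\omega_i^0|e^{-t/m}\le\|\Omega^0_\mathcal{B}\|_\infty e^{-\eta}$ and $|\nu_i|(1-e^{-t/m})\le\|\mathcal V_\mathcal{B}\|_\infty$, controls $|E_i(t)|$. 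For the remaining terms I bound $(1-e^{-t/m})\le 1$ and $(1-e^{-t/m})^2\le1$, and I use $\sup_{s\ge\eta}se^{-s}=(\eta\vee1)e^{-(\eta\vee1)}$, which holds because $se^{-s}$ is decreasing for $s\ge1$. This gives $|E_i(t)|\le D$, where $D$ is exactly the numerator in \eqref{eq:xbound}. Since $R_\mathcal{B}\ge\rho$ and $1-e^{-t/m}\ge1-e^{-\eta}$, the restoring coefficient is at least $\kappa\rho(1-e^{-\eta})$. Hypothesis \eqref{eq:xbound} therefore reads $D\le\kappa\rho(1-e^{-\eta})\sqrt{1-x^2}$.

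For part (1), compute
\[
\frac{d}{dt}\cos\theta_i=-\sin\theta_i\,\dot\theta_i=\kappa R_\mathcal{B}(1-e^{-t/m})\sin^2\theta_i-\sin\theta_i\,E_i.
\]
Whenever $\cos\theta_i=y$ with $|y|\le|x|$, we have $|\sin\theta_i|=\sqrt{1-y^2}\ge\sqrt{1-x^2}$. Then
\[
\frac{d}{dt}\cos\theta_i\ge|\sin\theta_i|\bigl(\kappa\rho(1-e^{-\eta})|\sin\theta_i|-D\bigr)\ge0.
\]
The main obstacle is that this inequality is only non-strict, so the barrier needs care. I would redo the estimate with $y$ replaced by nearby levels $y'<y$, still with $|y'|\le|x|$ (or perturb $D$ slightly), to get a strict version. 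Then I argue by first exit time. If $\cos\theta_i$ dropped below $y$ after $t_0$, consider the last time before the drop at which $\cos\theta_i=y'$ for some $y'$ slightly below $y$. At such a crossing the derivative is nonnegative, and the strict version contradicts a downward crossing. An alternative is to note that the set $\{t\ge t_0:\cos\theta_i(t)\ge y-\epsilon\}$ is open-closed for every small $\epsilon$ and then let $\epsilon\to0$. Since we may need $y'$ slightly below $-|x|$ when $y=-|x|$, I would instead handle that endpoint by the limiting argument using continuity. Once $\cos\theta_i\ge y>-1$ on $[t_0,T)$, $\theta_i$ stays in one component of $\{\cos\ge y\}$, an arc of length less than $2\pi$, which gives $\mathrm{osc}<2\pi$.

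For part (2), take $y=-|x|$. If $\cos\theta_i(t)<-|x|$ for all $t\in[\eta m,T)$, then $\theta_i$ lies in one component of $\{\cos<-|x|\}$, an arc of length less than $2\pi$, so $\mathrm{osc}<2\pi$. Otherwise, let $t_0$ be the first time in $[\eta m,T)$ with $\cos\theta_i(t_0)\ge-|x|$. On $[\eta m,t_0]$ the phase stays in the closure of one such arc, and by part (1) it stays afterwards in one arc of $\{\cos\ge-|x|\}$ adjacent to it. The union of two adjacent arcs has length at most $2\pi$ in the worst case. Summing the two oscillations then gives $\mathrm{osc}_{[\eta m,T)}\theta_i<4\pi$.
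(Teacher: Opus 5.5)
Your route is essentially the paper's. You rewrite $\dot\theta_i$ via Lemma~\ref{lem:orderlowering} as $-\kappa R_\mathcal{B}\sin\theta_i(1-e^{-t/m})+E_i$ with $|E_i|\le D$ (your bookkeeping of $D$ is correct), and then run a first-exit barrier argument on $\cos\theta_i$. The gap is in how you handle the non-strictness you flagged. Moving to a level $y'\in(-|x|,y)$ does give $\sqrt{1-y'^2}>\sqrt{1-x^2}$, hence a strictly positive derivative. But such levels exist only when $y>-|x|$, and in particular only when $x\neq0$. At the endpoint $y=-|x|$, your limiting argument ($y''\downarrow-|x|$ in the interior case) covers only $\cos\theta_i(t_0)>-|x|$. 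If $\cos\theta_i(t_0)=-|x|$ exactly, there is no admissible level in between. Your open--closed variant uses levels $-|x|-\epsilon$, where $|\sin\theta_i|<\sqrt{1-x^2}$ and \eqref{eq:xbound} gives no control. For $x=0$ nothing is covered at all. This is not a negligible corner case. In your part (2), the first entrance time $t_0>\eta m$ into $\{\cos\theta_i\ge-|x|\}$ has $\cos\theta_i(t_0)=-|x|$ exactly, so part (2) rests on precisely the uncovered case.

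The missing observation is that $|E_i(t)|\le D$ is in fact strict at every finite $t\ge\eta m$. The error term from Lemma~\ref{lem:orderlowering} is $2\kappa(1-e^{-t/m})Q(t)$, where $Q(t)\le Q^*:=\frac{|\mathcal{B}|}{N}m\bigl(\|\Omega^0_\mathcal{B}\|_\infty(\eta\vee1)e^{-(\eta\vee1)}+M_{F,\mathcal{B}}\bigr)+\frac{N-|\mathcal{B}|}{N}$. Since $Q^*\ge\frac{|\mathcal{B}|}{N}\,2m\kappa>0$ and $1-e^{-t/m}<1$, you get $(1-e^{-t/m})Q(t)<Q^*$, hence $|E_i(t)|<D$. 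So whenever $\cos\theta_i=y\in[-|x|,|x|]$, you have $\frac{d}{dt}\cos\theta_i\ge|\sin\theta_i|\bigl(\kappa\rho(1-e^{-\eta})\sqrt{1-x^2}-|E_i|\bigr)>0$. The first-exit argument then closes directly at every admissible level, including $y=-|x|$ and $x=0$, with no perturbation needed. The paper asserts this strict positivity and attributes it to \eqref{eq:xbound}, which by itself only yields $\ge0$. So you were right that something must be said there, but the fix is the strictness of the error bound, not moving the level. For $x\neq0$ you could alternatively run part (2) at an interior level $y''\in(-|x|,|x|)$, since both arcs still have length below $2\pi$; the case $x=0$ still needs the strict bound.
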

\begin{remark}\,
\begin{enumerate}
\item Later, we will make the choice $\eta=1$, in which case $\eta\vee 1=1$.
\item What is needed in the proof of Proposition \ref{prop:pod}, and eventually Theorem \ref{thm:pathwise}, is the case $\mathcal{B}=[N]$. However, as partial death criteria are of independent interest, we state Lemma \ref{lem:single} in the above generality.
\end{enumerate}

\end{remark}
\begin{proof}[Proof of Lemma \ref{lem:single}]
	\,
\begin{enumerate}
\item 
If $\cos\theta_i(t)>y$ for all $t\in [t_0,T)$, then there is nothing to prove. Otherwise, let $t_1\in[t_0,T)$ be the infimal time  such that
$\cos\theta_i(t_1)\le y$; this necessitates $\cos\theta_i(t_1)=y$ by continuity. We claim that $\frac{d}{dt}\cos\theta_i(t_1)>0$. Indeed, at $t=t_1$, $|\sin\theta_i(t_1)|=\sqrt{1-y^2}\ge\sqrt{1-x^2}$. Using
Lemma~\ref{lem:orderlowering} at $t_1$,
\begin{align*}
\bigl.\tfrac{d}{dt}\cos\theta_i\bigr|_{t=t_1}
&=-\sin\theta_i(t_1)\dot\theta_i(t_1)\\
&\ge \kappa R_\mathcal{B}(t_1)\sin^2\theta_i(t_1)(1-e^{-t_1/m})\\
&\quad-|\sin\theta_i(t_1)|\bigl(\|\Omega_\mathcal{B}^0\|_\infty e^{-t_1/m}+\|\mathcal V_\mathcal{B}\|_\infty(1-e^{-t_1/m})\bigr)\\
&\quad-|\sin\theta_i(t_1)|2\kappa (1-e^{-t_1/m})\left(\frac{|\mathcal{B}|}{N}\bigl(\|\Omega_\mathcal{B}^0\|_\infty t_1 e^{-t_1/m}+m M_{F,\mathcal{B}}(1-e^{-t_1/m})^2\bigr)+\frac{N-|\mathcal{B}|}{N}\right).
\end{align*}

Dividing by $|\sin\theta_i(t_1)|=\sqrt{1-y^2}(\ge\sqrt{1-x^2}>0)$ and using
$R_\mathcal{B}(t_1)\ge\rho$, $t_1\ge\eta m$,
\begin{align*}
\frac{1}{\sqrt{1-y^2}}\bigl.\tfrac{d}{dt}\cos\theta_i\bigr|_{t=t_1}
&\ge \kappa\rho\sqrt{1-y^2}(1-e^{-\eta})-\|\Omega_\mathcal{B}^0\|_\infty e^{-\eta}-\|\mathcal V_\mathcal{B}\|_\infty\\
&\quad-2\kappa\left(\frac{|\mathcal{B}|}{N}m\left(\|\Omega_\mathcal{B}^0\|_\infty(\eta\vee 1)e^{-(\eta\vee 1)}+ M_{F,\mathcal{B}}\right)+\frac{N-|\mathcal{B}|}{N}\right)\\
&>0,
\end{align*}
where the first inequality uses the fact applied to $s=t_1/m$ that the function $s\mapsto se^{-s}$ is unimodal with maximum at $s=1$, so that $s\ge \eta$ implies that $se^{-s}\le (\eta\vee 1)e^{-(\eta\vee 1)}$, and
the second inequality is \eqref{eq:xbound} combined with $1-y^2\ge 1-x^2$. Thus
$\frac{d}{dt}\cos\theta_i(t_1)>0$.

Since $\cos\theta_i(t_0)\ge y$ yet $t_1$ is the first time such that $\cos\theta_i(t_1)\le y$, the inequality $\frac{d}{dt}\cos\theta_i(t_1)>0$ implies that $t_1=t_0$ (since otherwise $t_1>t_0$ and there are times slightly smaller than $t_1$ at which $\cos \theta_i<y$, violating the minimality of $t_1$) So, it must be that $\cos\theta_i(t_0)=y$ and $\cos\theta_i(t)>y$ for $t\in [t_0,t_0+\varepsilon)\subset [t_0,T)$ for some small $\varepsilon$.

If there were another time $t_2\in [t_0+\varepsilon,T)$ for which $\cos\theta_i(t_2)\le y$, then by making $t_2$ the earliest such time, we must have $\cos\theta_i(t_2)=y$, and the same computation as above shows that $\frac{d}{dt}\cos\theta_i(t_2)>0$, giving a contradiction (as there would be times slightly smaller than $t_2$ at which $\cos\theta_i<y$, violating the minimality of $t_2$).

By this exit-time argument, we have $\cos\theta_i(t)\ge y$ for all $t\in[t_0,T)$ (and in fact, the stronger statement that $\cos\theta_i(t)>y$ for $t\in (t_0,T)$).

The latter statement follows from $y>-1$, and the continuity of $\theta_i$ in $t$. More explicitly, $\theta_i|_{[t_1,T)}$ is confined to a single component of $\{\cos\theta\ge -|x|\}$, a closed
interval of length $2(\pi-\arccos|x|)<2\pi$; hence
$\mathrm{osc}_{[t_1,T)}\,\theta_i<2\pi$. 
\item Fix $i\in\mathcal{B}$. Define
$t^{*}:=\inf\{t\in[\eta m,T):\cos\theta_i(t)\ge -|x|\}$
(set $t^{*}=T$ if the set is empty).

If $t^{*}=T$, then $\cos\theta_i(t)<-|x|$ for every $t\in[\eta m,T)$;
since $|x|<1$, the continuous lift $\theta_i$ is confined to a single
connected component of $\{\theta\in\R:\cos\theta<-|x|\}$, an open
interval of length $2\arccos|x|<2\pi$.

If $t^{*}<T$, then by continuity $\cos\theta_i(t^{*})\ge -|x|$, and
part~(1) with $y=-|x|$ gives $\cos\theta_i(t)\ge -|x|$ for all
$t\in[t^{*},T)$. The lift $\theta_i|_{[t^{*},T)}$ is therefore
confined to a single component of $\{\cos\theta\ge -|x|\}$, a closed
interval of length $2(\pi-\arccos|x|)<2\pi$; hence
$\mathrm{osc}_{[t^{*},T)}\,\theta_i<2\pi$. On $[\eta m,t^{*})$ we
have $\cos\theta_i<-|x|$, so by the same connected-component argument
$\mathrm{osc}_{[\eta m,t^{*}]}\,\theta_i\le 2\arccos|x|<2\pi$.
Sub-additivity of oscillation gives
\[
\mathrm{osc}_{[t_1,T)}\,\theta_i\le \mathrm{osc}_{[\eta m,t^{*}]}\,\theta_i+\mathrm{osc}_{[t^{*},T)}\,\theta_i
<2\pi+2\pi=4\pi.\qedhere
\]

\end{enumerate}
\end{proof}

\subsection{Criterion for partial oscillator death}

We now state and prove the main criterion for partial oscillator death. This involves an order parameter bootstrapping argument of \cite{ryoo2026oscillator}. Below, now there are two subsets $\mathcal{A}\subset \mathcal{B}\subset [N]$. The smaller set $\mathcal{A}$ is a subset that we know that the oscillators are in some neighborhood of $0$ and which we wish to have detailed control over, and the larger set $\mathcal{B}$ are oscillators whose locations we do not know precisely but which we wish to have some loose control over.

\begin{proposition}[Criterion for partial oscillator death]\label{prop:pod}
Let $\Theta(t)$ be the solution to~\eqref{Winfree}, and let
$\eta>0$, $t_0\ge\eta m$, $\rho\in(0,2]$, $x\in (-1,1)$, and
$\mathcal A\subset\mathcal{B}\subset [N]$. Assume \eqref{eq:xbound} and assume that at time $t_0$,
\begin{equation}\label{eq:pod-init}
\cos\theta_i(t_0)\ge x\ \ \text{for every }i\in\mathcal A,\qquad
\frac1N\sum_{i\in\mathcal A}\bigl(1+\cos\theta_i(t_0)\bigr)>\frac{2\rho}{1+|x|}.
\end{equation}
Then:
\begin{enumerate}
\item $R_\mathcal{A}(t)\ge\rho$ for every $t\ge t_0$;
\item $\cos\theta_i(t)\ge x$ for every $i\in\mathcal A$ and $t\ge t_0$, so that $\operatorname{osc}_{ [t_0,\infty)}\theta_i<2\pi$ for every
$i\in\mathcal A$;
\item $\operatorname{osc}_{[t_0,\infty)}\theta_i<4\pi$ for every
$i\in \mathcal{B}$.
\end{enumerate}
\end{proposition}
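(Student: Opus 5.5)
The plan is a continuity (bootstrap) argument on the order-parameter lower bound, with Lemma~\ref{lem:single} as the engine. Set
\[
T\coloneqq\sup\{t\ge t_0:\ R_\mathcal{A}(s)\ge\rho\ \text{for all } s\in[t_0,t]\}.
\]
By \eqref{eq:pod-init} we have $R_\mathcal{A}(t_0)>2\rho/(1+|x|)\ge\rho$, so by continuity $T>t_0$. Since $\mathcal A\subset\mathcal B$, we get $R_\mathcal{B}\ge R_\mathcal{A}\ge\rho$ on $[t_0,T)$. This is exactly the a priori hypothesis of Lemma~\ref{lem:single}, except that it holds on $[t_0,T)$ rather than $[\eta m,T)$. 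I will check, and state explicitly, that the exit-time argument in part (1) of that lemma only uses $R_\mathcal{B}(t_1)\ge\rho$ at the exit times $t_1\ge t_0\ge \eta m$. Likewise, part (2) only uses the bound on the interval over which the oscillation is measured. So both parts apply verbatim with $[\eta m,T)$ replaced by $[t_0,T)$. This bookkeeping point is the first thing I would nail down.

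The key step is to show $R_\mathcal{A}(t)>\rho$ strictly on $[t_0,T)$, with a margin that survives to $t=T$. Fix $i\in\mathcal A$ and choose the threshold $y_i$ oscillator by oscillator, as follows.
\begin{itemize}
\item If $\cos\theta_i(t_0)\le|x|$, take $y_i=\cos\theta_i(t_0)$. This lies in $[x,|x|]\subset[-|x|,|x|]$ by \eqref{eq:pod-init}, and Lemma~\ref{lem:single}(1) gives $\cos\theta_i(t)\ge\cos\theta_i(t_0)$ on $[t_0,T)$.
\item If $\cos\theta_i(t_0)>|x|$, take $y_i=|x|$. Then $1+\cos\theta_i(t)\ge 1+|x|\ge \tfrac{1+|x|}{2}\bigl(1+\cos\theta_i(t_0)\bigr)$.
\end{itemize}
In both cases $1+\cos\theta_i(t)\ge\tfrac{1+|x|}{2}(1+\cos\theta_i(t_0))$. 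Summing over $\mathcal A$ and using the second condition in \eqref{eq:pod-init} gives, for all $t\in[t_0,T)$,
\[
R_\mathcal{A}(t)\ge\tfrac{1+|x|}{2}R_\mathcal{A}(t_0)>\rho.
\]
A naive choice $y=x$ for every $i$ would only give the factor $(1+x)/2$, which fails when $x<0$. The case split above is what makes the constant $2/(1+|x|)$ in the hypothesis exactly right, and it is the step I expect to require the most care.

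To close the bootstrap, suppose $T<\infty$. The bound $R_\mathcal{A}(t)\ge\tfrac{1+|x|}{2}R_\mathcal{A}(t_0)$ passes to $t=T$ by continuity, so $R_\mathcal{A}(T)>\rho$. By continuity again, $R_\mathcal{A}\ge\rho$ on a neighborhood to the right of $T$, contradicting the definition of $T$. Hence $T=\infty$, which gives (1).

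With $R_\mathcal{B}\ge R_\mathcal{A}\ge\rho$ now known on $[t_0,\infty)$, Lemma~\ref{lem:single}(1) with $y=x$ gives $\cos\theta_i(t)\ge x$ for all $i\in\mathcal A$ and $t\ge t_0$. This confines each such $\theta_i$ to one component of $\{\cos\theta\ge x\}$, an interval of length less than $2\pi$, which gives (2). Finally, Lemma~\ref{lem:single}(2) on $[t_0,\infty)$ gives $\operatorname{osc}\theta_i<4\pi$ for $i\in\mathcal B$, which is (3).
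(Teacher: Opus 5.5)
Your proposal is correct and follows the paper's proof essentially step for step: the same bootstrap on $R_\mathcal{A}\ge\rho$, the same per-oscillator case split ($y=\cos\theta_i(t_0)$ versus $y=|x|$) giving $R_\mathcal{A}(t)\ge\frac{1+|x|}{2}R_\mathcal{A}(t_0)>\rho$, and items (2)--(3) read off from Lemma~\ref{lem:single}. Your explicit check that Lemma~\ref{lem:single} only needs the a priori bound $R_\mathcal{B}\ge\rho$ on $[t_0,T)$ is worth keeping, since all exit times satisfy $t_1\ge t_0\ge\eta m$. The paper uses this silently.
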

\begin{proof}
	\,
	
\emph{Items (1) and (2).} Define
$\mathcal T:=\{T>t_0\colon R_\mathcal{A}(t)\ge\rho\text{ for all }t\in[t_0,T)\}$.
By~\eqref{eq:pod-init},
$R_\mathcal{A}(t_0)\ge\frac1N\sum_{i\in\mathcal A}(1+\cos\theta_i(t_0))>\frac{2\rho}{1+|x|}>\rho$,
so by continuity $\mathcal T\ne\emptyset$; set $T^\star:=\sup\mathcal T\in(t_0,\infty]$.
For $t\in[t_0,T^\star)$ we have $R_\mathcal{B}(t)\ge R_\mathcal{A}(t)\ge\rho$, so
Lemma~\ref{lem:single} (1) applies; it yields
$\cos\theta_i(t)\ge x$ for $i\in\mathcal A$, $t\in[t_0,T^\star)$, and $\operatorname{osc}_{[t_0,T^\star)}\theta_i<2\pi$, which
is item~(2) but restricted to $[t_0,T^\star)$. So to prove items (1) and (2), it is enough to prove $T^\star=\infty$.

We observe that for $i\in \mathcal{A}$ and $t\in [t_0,T^\star)$, we have
$1+\cos\theta_i(t)\ge\frac{1+|x|}{2}(1+\cos\theta_i(t_0))$.
To see this, consider two cases.
If $\cos\theta_i(t_0)\le|x|$, then
Lemma~\ref{lem:single} applied with $y:=\cos\theta_i(t_0)\in[-|x|,|x|]$
gives $\cos\theta_i(t)\ge\cos\theta_i(t_0)$, so
$1+\cos\theta_i(t)\ge 1+\cos\theta_i(t_0)\ge\frac{1+|x|}{2}(1+\cos\theta_i(t_0))$
since $\frac{1+|x|}{2}\le 1$.
If $\cos\theta_i(t_0)>|x|$, then Lemma~\ref{lem:single} with $y:=|x|$
gives $\cos\theta_i(t)\ge|x|$, so
$1+\cos\theta_i(t)\ge 1+|x|\ge\frac{1+|x|}{2}(1+\cos\theta_i(t_0))$
since $1+\cos\theta_i(t_0)\le 2$.

Summing, we have
\begin{align*}
R_\mathcal{A}(t)\ge\frac1N\sum_{i\in\mathcal A}(1+\cos\theta_i(t))\ge\frac{1+|x|}{2}\cdot\frac1N\sum_{i\in\mathcal{A}}(1+\cos\theta_i(t_0))>\frac{1+|x|}{2}\cdot\frac{2\rho}{1+|x|}=\rho,\quad t\in [t_0,T^\star).
\end{align*}
If $T^\star<\infty$, then by definition of $\mathcal{T}$ and continuity of $R_\mathcal{A}$, we have $R_\mathcal{A}(T^\star)=\rho$. However, continuity of $R_\mathcal{A}$ again gives
\[
R_\mathcal{A}(T^\star)=\lim_{t\to T^\star -}R_\mathcal{A}(t)\ge \frac{1+|x|}{2}\cdot\frac1N\sum_{i\in\mathcal A}(1+\cos\theta_i(t_0))>\frac{1+|x|}{2}\cdot\frac{2\rho}{1+|x|}=\rho,
\]
a contradiction. Hence
$T^\star=\infty$, and items~(1) and~(2) hold for all $t\ge t_0$.

\smallskip
\emph{Item (3).} This follows from Lemma \ref{lem:single}(2).

\end{proof}

\begin{remark}
The threshold condition \eqref{eq:pod-init}, specifically, the
requirement that the $\mathcal A$-cluster sum exceeds
$\tfrac{2\rho}{1+|x|}$, is the analogue for Winfree of the majority
cluster condition of~\cite[Theorem~4.1]{cho2025quantitative}.
The factor $\tfrac{2}{1+|x|}$ comes from the fact that, along the
trajectory, $(1+\cos\theta_i)$ can drop by at most a factor of
$\tfrac{1+|x|}2$ before the trapping mechanism of
Lemma~\ref{lem:single} takes over.
\end{remark}

\subsection{Proof of Theorem~\ref{thm:pathwise}}\label{sec:pathwise-assembly}

Let $\eta:=1$ throughout.

\paragraph{Stage A (initial layer).}

By setting $\delta=\frac 12$ in  Lemma \ref{lem:initlayer}, if
\begin{equation}\label{eq:quant-check-1}
m\|\Omega^0\|_\infty(1-e^{-1})+M_F\cdot me^{-1}<\frac 12 R_0,
\end{equation}
then
\begin{equation}\label{eq:initial-layer}
R(t)>\frac 12 R_0,\qquad t\in [0,m].
\end{equation}

Because
\begin{equation}\label{eq:M_F-bound}
	M_F\le \|\mathcal{V}\|_\infty +2\kappa <(2+aR_0^{3/2})\kappa,
\end{equation}
we have, using $R_0\le 2$,
\begin{align*}
m\|\Omega^0\|_\infty(1-e^{-1})+M_F\cdot me^{-1}&<m\kappa \cdot \frac{\|\Omega^0\|_\infty}{\kappa}\cdot (1-e^{-1}) + (2+aR_0^{3/2})\kappa\cdot me^{-1}\\
&<bc R_0^3+(2+aR_0^{3/2})e^{-1}b R_0^{3/2}\\
&\le \left(4bc+(2+2\sqrt{2}a)e^{-1}\sqrt{2}b\right) R_0
\end{align*}
so that \eqref{eq:quant-check-1} is satisfied if
\begin{equation}\label{eq:quant-check-1-abc}
	4bc+(2+2\sqrt{2}a)e^{-1}\sqrt{2}b<\frac 12.
\end{equation}
With our choice of $a=1/50$, $b=1/80$, and $c=1/20$, this computes to
\[
0.0159\cdots<\frac 12.
\]

\paragraph{Stage B (condensation at $t_0=m$).}
Let $\mu\in(0,\frac{R_0}{2}]$ be a free parameter,
to be chosen later.
 Define
\begin{equation}\label{def:cA}
\mathcal A\coloneqq\{i\in[N]\colon\cos\theta_i(m)\ge-1+\mu\}.
\end{equation}
 We claim that 
\begin{equation}\label{eq:cA-lbound}
	\frac 1N\sum_{i\in \mathcal{A}}(1+\cos\theta_i(m))\ge\frac{2(R(m)-\mu)}{2-\mu }>\frac{R_0-2\mu}{2-\mu}.
\end{equation}
To prove the claim, we first observe by definition \eqref{order_parameter} that
 \begin{equation}\label{sum}
 	\frac 1N\sum_{i\in \mathcal{A}}(1+\cos\theta_i(m))+\frac 1N\sum_{i\in \{1,\cdots,N\}\setminus\mathcal{A}}(1+\cos\theta_i(m))=R(m).
 \end{equation}
 Then, using
 \begin{equation}\label{sizebound}
 	\begin{cases}
 		1+\cos\theta_i(m)\le 2, & \mbox{for } i\in \mathcal{A},\\
 		1+\cos\theta_i(m)\le \mu ,&\mbox{for } i\in \{1,\cdots,N\}\setminus\mathcal{A},
 	\end{cases}
 \end{equation}
 we derive
 \begin{align*}
 	\mu \cdot \frac 1N\sum_{i\in \mathcal{A}}(1+\cos\theta_i(m))\stackrel{\eqref{sizebound}}{\le}& \mu \cdot \frac{2|\mathcal{A}|}{N}=2\mu -2\mu \cdot \frac{|\{1,\cdots,N\}\setminus\mathcal{A}|}{N}\\
 	\stackrel{\eqref{sizebound}}{\le} &2\mu -\frac 2N\sum_{i\in \{1,\cdots,N\}\setminus \mathcal{A}}(1+\cos\theta_i(m))\\
 	\stackrel{\eqref{sum}}{=}&2\mu -2R(m)+\frac 2N\sum_{i\in  \mathcal{A}}(1+\cos\theta_i(m)),
 \end{align*}
 from which the above claim \eqref{eq:cA-lbound} follows.

We apply Proposition~\ref{prop:pod} at $t_0=\eta m=m$, with
$\mathcal A$ as in~\eqref{def:cA} and $\mathcal{B}=[N]$. Choose $\mu=\rho=\frac{R_0}{4}$ and $x=-1+\mu=-1+\frac{R_0}{4}\in (-1,-\frac 12]$, so that $|x|=1-\frac{R_0}{4}$. By definition of $\mathcal{A}$, the first statement of \eqref{eq:pod-init} follows. The second statement of \eqref{eq:pod-init} follows by \eqref{eq:cA-lbound} and our choice of $\mu,\rho,x$.

Condition \eqref{eq:xbound} with $\mathcal{B}=[N]$ and our choice of parameters reads
\begin{equation}\label{eq:balance}
	\frac{R_0}{4}\left(2-\frac{R_0}{4}\right)\ge\left(\frac{\|\Omega^0\|_\infty e^{-1}+\|\mathcal V\|_\infty+2m\kappa\|\Omega^0\|_\infty e^{-1}+2m\kappa M_F}{\kappa R_0(1-e^{-1})/4}\right)^{\!2}
\end{equation}
which is equivalent to
\[
	\frac{R_0^3}{64}\left(2-\frac{R_0}{4}\right)(1-e^{-1})^2\ge\left(\frac{\|\Omega^0\|_\infty}\kappa e^{-1}+\frac{\|\mathcal V\|_\infty}\kappa+2m\kappa\cdot \frac{\|\Omega^0\|_\infty}{\kappa} e^{-1}+2m\kappa \frac{M_F}{\kappa}\right)^{\!2}.
\]
But by $R_0\le 2$,
\[
\frac{R_0^3}{64}\left(2-\frac{R_0}{4}\right)(1-e^{-1})^2\ge \frac{3R_0^3}{128}(1-e^{-1})^2,
\]
while by \eqref{eq:M_F-bound},
\begin{align*}
&\left(\frac{\|\Omega^0\|_\infty}\kappa e^{-1}+\frac{\|\mathcal V\|_\infty}\kappa+2m\kappa\cdot \frac{\|\Omega^0\|_\infty}{\kappa} e^{-1}+2m\kappa \frac{M_F}{\kappa}\right)^{\!2}\\
&\le \left(c R_0^{3/2}e^{-1}+aR_0^{3/2}+2bR_0^{3/2}\cdot cR_0^{3/2}e^{-1}+2bR_0^{3/2}\cdot (2+aR_0^{3/2})\right)^2\\
&\le \left(e^{-1}c+a+4\sqrt{2}e^{-1}bc+4b (1+a\sqrt{2})\right)^2 R_0^3
\end{align*}
so for \eqref{eq:balance} to hold, it is enough that
\begin{equation}\label{eq:quant-check-2-abc}
e^{-1}c+a+4\sqrt{2}e^{-1}bc+4b (1+a\sqrt{2})\le \frac{\sqrt{3}}{8\sqrt{2}}(1-e^{-1}).
\end{equation}
With our choice of $a=1/50$, $b=1/80$, and $c=1/20$, this computes to
\[
0.0911<\frac{\sqrt{3}}{8\sqrt{2}}(1-e^{-1})\approx 0.0968.
\]
Note that there's a tighter margin than \eqref{eq:quant-check-1-abc}; this condition \eqref{eq:quant-check-2-abc} tends to be the bottleneck when optimizing over $a,b,c$.
\paragraph{Stage C (persistence).}

All hypotheses of Proposition~\ref{prop:pod} are
met at $t_0=m$ with $\mathcal A$ from~\eqref{def:cA} and $\mathcal{B}=[N]$,
$\mu=R_0/4$, $x=-1+\mu$, $\rho=R_0/4$. Its conclusion gives:
\begin{enumerate}
\item $R(t)\ge R_0/4$ for all $t\ge m$. Combined with the initial-layer
bound $R(t)\ge R_0/2$ for $t\in[0,m]$ (which follows
from~\eqref{eq:initial-layer}), we obtain $\inf_{t\ge 0}R(t)\ge R_0/4$,
proving part~(2) of Theorem~\ref{thm:pathwise}.
\item For every $i\in[N]$, we have $\sup_{t\ge t_0}\theta_i(t)-\inf_{t\ge t_0}\theta_i(t)<4\pi$. So, the trajectory is uniformly bounded, and
Proposition~\ref{prop:lojasiewicz} applies and yields
$\dot\theta_i(t)\to 0$ and
$\theta_i(t)\to\theta_i^\infty$, which is part~(1) of
Theorem~\ref{thm:pathwise}. \qed
\end{enumerate}

\begin{remark}[Sharpness in $\mu$ and $\delta$]\label{rmk:R032}
The choice $\mu=R_0/4$ in Stage~C is a convenient but non-optimal
midpoint. Optimizing $\mu\in(0,R_0)$ and $\delta\in (0,1)$ can give different constants in the statement of Theorem \ref{thm:pathwise}. For example, one may improve the
constant on the right-hand side of Theorem \ref{thm:pathwise}(2) from $R_0/4$ to
$R_0(1-\varepsilon)$ for any $\varepsilon>0$ by taking $a,b,c$ very small: one simply takes $\delta\to 1$ and $\mu\to 0$. One might be able to take $a$ arbitrarily close to $\frac 12$ (in the range $R_0\in (0,1]$) at the expense of smaller $b,c,$ by choosing the $\mu$ as in the proof of \cite[Corollary 16]{ryoo2026oscillator}:
\[
\mu = \frac{3+R_0-\sqrt{R_0^2-2R_0+9}}{4}.
\]

However, we stress that the scaling exponent $3/2$ on
$R_0$ in the hypothesis is \emph{not} sensitive to this choice: it
comes from the $\sqrt{\mu(2-\mu)}\sim\sqrt{R_0}$ factor on the
left-hand side of~\eqref{eq:balance} times the $\rho\sim R_0$ factor on
the right-hand side. This is specific to the Winfree model and it has
no analogue in Kuramoto, where the corresponding computation yields
$R_0^{2}$ (see~\cite[Theorem~1.1]{cho2025quantitative}).
\end{remark}

\section{Proof of Theorem~\ref{thm:qualitative_mto0}}\label{sec:qualitative}

We now turn to the qualitative zero-inertia synchronization theorem, Theorem \ref{thm:qualitative_mto0}. Let $\Theta^0\in\R^N$ satisfy $\theta_i^0\in (-\pi,\pi)$ for all $i\in [N]$. Let $\varepsilon\in (0,1)$.

We are to show that there exist positive numbers $a,b,c$, depending only on $\|\Theta^0\|\in [0,\pi)$ and
$\varepsilon$, such that if the initial velocities $\Omega^0$ and
system parameters satisfy
\begin{equation}\label{eq:qual-abc}
	\frac{\|\mathcal{V}\|_\infty}{\kappa}<a,\qquad
m\kappa<b,\qquad 	\frac{\|\Omega^0\|_\infty}{\kappa}<c,
\end{equation}
then oscillator death occurs for the solution $\Theta(t)$ of
\eqref{Winfree} and
\[
\lim_{t\to\infty}R(t)>2-\varepsilon.
\]

Recall that we write $\Theta(m,t)$ for the solution of~\eqref{Winfree} with initial position and velocity data $\Theta^0,\Omega^0$, and $\Theta(0,t)$ for the solution of~\eqref{GenWinfree}--\eqref{standard} with initial
position data $\Theta^0$.

\subsection{Step 1. The first-order Winfree theory}
We recall the following result on the first-order solution $\Theta(0,t)$.
\begin{theorem}[{\cite[special case of Theorem 2.2]{ha2015emergence}}]\label{thm:HPR}
	For $\alpha\in \left(\frac{\pi}{3},\pi\right)$, let $\alpha^\infty\in \left(0,\frac{\pi}{3}\right)$ be the unique solution to $\sin\alpha^\infty(1+\cos\alpha^\infty)=\sin\alpha(1+\cos\alpha)$ (note that the function $s\mapsto \sin s(1+\cos s)$ is unimodal, increasing on $[0,\pi/3]$ and decreasing on $[\pi/3,\pi]$).
	
	\begin{enumerate}
		\item (Existence and uniqueness of equilibrium) Then system \eqref{GenWinfree}--\eqref{standard} with parameters $\mathcal{V}$ and $\kappa$ satisfying
		\[
		\frac{\|\mathcal{V}\|_\infty}{\kappa}<\sin\alpha(1+\cos\alpha)
		\]
		has a unique equilibrium $\Theta^\infty$ in $[-\alpha,\alpha]^N$.
		Furthermore, $\Theta^\infty\in (-\alpha^\infty,\alpha^\infty)^N$.
		\setcounter{name}{\value{enumi}}
	\end{enumerate}
	Moreover, let $\Theta(0,t)=\{\theta_i(0,t)\}_{i=1}^N$ be the solution to \eqref{GenWinfree}--\eqref{standard} with initial data $\Theta^0$ such that $\theta_i^0\in [-\alpha,\alpha]$, $i=1,\cdots,N$. Then $\Theta(0,t)\to \Theta^\infty$ exponentially as $t\to\infty$. More precisely,
	\begin{enumerate}
		\setcounter{enumi}{\value{name}}
		\item (Finite-time entrance into stable region) there exists a time $T\le \frac{\pi}{\kappa\sin\alpha(1+\cos\alpha)-\|\mathcal{V}\|_\infty}$ such that $\Theta(0,t)\in (-\alpha^\infty,\alpha^\infty)^N$ for $t\ge T$, and we can take $T=0$ if $\Theta^0\in (-\alpha^\infty,\alpha^\infty)^N$; also,
		\item (Exponential convergence to equilibrium) we have that
		\[
		\|\Theta(0,t)-\Theta^\infty\|_{\ell_1^N}\le \|\Theta(0,T)-\Theta^\infty\|_{\ell_1^N}\exp\left[-\kappa (2\cos\alpha^\infty-1)(\cos\alpha^\infty+1)(t-T)\right],\quad t\ge T.
		\]
	\end{enumerate}
\end{theorem}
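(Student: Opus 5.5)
The plan is to obtain everything from one invariance mechanism for the sup-norm $M(t):=\|\Theta(0,t)\|_\infty$ together with an $\ell^1$-contraction estimate inside $(-\alpha^\infty,\alpha^\infty)^N$. Throughout set $g(s):=\sin s(1+\cos s)$ and $\gamma:=\kappa g(\alpha)-\|\mathcal V\|_\infty>0$. By unimodality of $g$ on $[0,\pi]$ we have $g(s)\ge g(\alpha)$ for all $s\in[\alpha^\infty,\alpha]$.

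\emph{Step 1 (invariance and entrance).} Suppose $M(t)=s\in[\alpha^\infty,\alpha]$ and $|\theta_i(0,t)|=s$. Every oscillator lies in $[-s,s]$, so $R\ge 1+\cos s$. The equation $\dot\theta_i=\nu_i-\kappa R\sin\theta_i$ then gives
\[
\frac{d}{dt}|\theta_i|\le |\nu_i|-\kappa(1+\cos s)\sin s\le \|\mathcal V\|_\infty-\kappa g(\alpha)=-\gamma<0 .
\]
By a standard Dini-derivative argument for a maximum of finitely many $C^1$ functions, $D^+M\le-\gamma$ whenever $M\in[\alpha^\infty,\alpha]$. Consequently $[-\alpha,\alpha]^N$ is forward invariant. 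Moreover $M$ falls below $\alpha^\infty$ within time $(\alpha-\alpha^\infty)/\gamma\le\pi/\gamma$, which gives the bound on $T$ in (2). The same computation at level $s=\alpha^\infty$ shows that $(-\alpha^\infty,\alpha^\infty)^N$ is forward invariant, so $T=0$ is admissible when $\Theta^0$ already lies there.

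\emph{Step 2 (contraction).} Take two solutions $\Theta,\Phi$ of the first-order system lying in $(-\alpha^\infty,\alpha^\infty)^N$, and set $c:=\cos\alpha^\infty$, $s_\infty:=\sin\alpha^\infty$. Write
\[
R(\Theta)\sin\theta_i-R(\Phi)\sin\varphi_i=R(\Theta)(\sin\theta_i-\sin\varphi_i)+\sin\varphi_i\,(R(\Theta)-R(\Phi)).
\]
Multiply by $\operatorname{sgn}(\theta_i-\varphi_i)$ and sum over $i$. The first term contributes at least $(1+c)\,c\,|\theta_i-\varphi_i|$, using $R\ge 1+c$ and the mean value theorem with $\cos\ge c$ on the arc. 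For the second term, $|R(\Theta)-R(\Phi)|\le\frac{s_\infty}{N}\sum_j|\theta_j-\varphi_j|$ and $|\sin\varphi_i|\le s_\infty$, so after summing over $i$ it contributes at most $s_\infty^2\|\Theta-\Phi\|_{\ell^1}$. Hence
\[
\frac{d}{dt}\|\Theta-\Phi\|_{\ell^1}\le-\kappa\bigl(c(1+c)-(1-c^2)\bigr)\|\Theta-\Phi\|_{\ell^1}=-\kappa(2c-1)(1+c)\|\Theta-\Phi\|_{\ell^1}.
\]
This rate is positive precisely because $\alpha^\infty<\pi/3$. Since $\operatorname{sgn}$ is nonsmooth at zero, the estimate should be justified with upper Dini derivatives.

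\emph{Step 3 (equilibrium, uniqueness, convergence).} For existence, note that $[-\alpha^\infty,\alpha^\infty]^N$ is also forward invariant, so Brouwer's theorem applied to the time-$h$ flow maps gives fixed points. Because these flow maps are strict $\ell^1$-contractions, each has a unique fixed point, and the fixed points for $h$ and $h/2$ coincide, so a common fixed point exists and is an equilibrium. Alternatively, apply the contraction to $\Theta(t)$ and $\Theta(t+h)$ to show that $\Theta(t)$ is Cauchy as $t\to\infty$, and take its limit. The equilibrium lies in the open box by the strict invariance of Step 1.

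For uniqueness in $[-\alpha,\alpha]^N$, observe that any equilibrium there is a stationary solution. By Step 1 it must enter $(-\alpha^\infty,\alpha^\infty)^N$, so it already lies there, and the contraction then forces it to equal $\Theta^\infty$.

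Finally, applying the contraction with $\Phi\equiv\Theta^\infty$ on $[T,\infty)$ yields (3).

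The main obstacle is Step 2: one must keep the cross term sharp enough, pairing $s_\infty$ from $|\sin\varphi_i|$ with $s_\infty$ from the Lipschitz bound on $R$, to recover exactly the rate $(2\cos\alpha^\infty-1)(1+\cos\alpha^\infty)$.
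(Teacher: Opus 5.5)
Your proof is correct. The paper gives no proof of its own to compare it with: it imports this theorem from \cite{ha2015emergence}, as a special case of Theorem~2.2 there, and uses only items (1) and (2) in Section~\ref{sec:qualitative}. Your route is a self-contained trapping-region plus $\ell^1$-contraction argument. The sup-norm drift bound $\|\mathcal V\|_\infty-\kappa g(s)\le-\gamma$ on $[\alpha^\infty,\alpha]$ does three jobs. It gives invariance of $[-\alpha,\alpha]^N$. It gives entrance into $(-\alpha^\infty,\alpha^\infty)^N$ before time $\pi/\gamma$. It also places every equilibrium in $[-\alpha,\alpha]^N$ inside the open box. Pairing $|\sin\varphi_i|\le\sin\alpha^\infty$ with the Lipschitz bound on $R$ then yields exactly the rate $\kappa(2\cos\alpha^\infty-1)(1+\cos\alpha^\infty)=\kappa g'(\alpha^\infty)$, which explains why $\alpha^\infty<\pi/3$ is the natural threshold. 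The citation buys brevity and the more general setting of the reference; your version makes the origin of $\alpha^\infty$ and of the decay rate transparent.

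Three small points should be tightened when you write it out.
\begin{itemize}
\item The bound $D^+M\le-\gamma$ at a time where $M=\alpha^\infty$ only controls $M$ to the right. To rule out $M$ reaching $\alpha^\infty$ from below, which is what invariance of the \emph{open} box needs, use that a maximizing $|\theta_i|$ is $C^1$ there (because $\alpha^\infty>0$) with two-sided derivative at most $-\gamma$.
\item $M$ only hits $\alpha^\infty$ by time $(\alpha-\alpha^\infty)/\gamma$, so $T$ must be taken slightly later. This is harmless, since $\alpha-\alpha^\infty<\pi$.
\item The fixed-point step lives on the closed box $[-\alpha^\infty,\alpha^\infty]^N$, so state Step~2 there; the estimates hold verbatim with $\cos\ge\cos\alpha^\infty$. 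Banach's theorem then already gives existence and uniqueness of the fixed point of each time-$h$ map, without Brouwer. Passing from the common fixed point of the dyadic time maps to an equilibrium uses continuity of the flow in $t$. Your Cauchy alternative avoids this altogether: $\|\Theta(t+h)-\Theta(t)\|_{\ell^1}\le 2N\alpha^\infty e^{-\kappa(2\cos\alpha^\infty-1)(1+\cos\alpha^\infty)(t-T)}$.
\end{itemize}
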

We will only use statements (1) and (2) but not (3).

Since $\theta_i^0\in (-\pi,\pi)$ for all $i\in [N]$, we may choose an $\alpha\in \left(\frac{\pi}{3},\pi\right)$, depending only on $\|\Theta^0\|$ and $\varepsilon>0$, such that
\[
\theta_i^0\in (-\alpha,\alpha)~\forall i\in [N],\qquad \mathrm{and}\qquad \cos (2\alpha^\infty)>1-\frac{\varepsilon}2
\]
(the latter being possible since $\alpha^\infty\to 0$ as $\alpha\to\pi$), 
and let $a>0$ being small enough (depending on $\alpha$) so that
\[
a\le\frac 12 \sin \alpha (1+\cos\alpha).
\]
Then $\frac{\|\mathcal{V}\|_\infty}{\kappa}<a<\sin\alpha(1+\cos\alpha)$, so that by Theorem \ref{thm:HPR}, there exists a time
\[
\frac{\pi}{\kappa\sin\alpha(1+\cos\alpha)-\|\mathcal{V}\|_\infty}\le \frac{\pi}{\kappa\sin\alpha(1+\cos\alpha)-a\kappa}\le \frac{2\pi}{\kappa\sin\alpha(1+\cos\alpha)}=T
\]
such that
\[
\Theta(0,T)\in (-\alpha^\infty,\alpha^\infty)^N.
\]
By taking $b$ sufficiently small in \eqref{eq:qual-abc}, we may ensure that $T\ge m$.

\subsection{Step 2. Tikhonov approximation}

By \eqref{eq:tik-phase},
\begin{equation}
	\|\Theta(m,T)-\Theta(0,T)\|_\infty < \frac 12 m \Big(\|\Omega^0-\mathcal{V}\|_\infty+2\kappa \Big )e^{4\kappa T}.
\end{equation}
Therefore, if
\[
\frac 12 m \Big(\|\Omega^0-\mathcal{V}\|_\infty+2\kappa \Big )e^{4\kappa T}<\alpha^\infty,
\]
which is possible for small enough $a,b,c>0$ (depending on $\alpha$) since
\[
\frac 12 m \Big(\|\Omega^0-\mathcal{V}\|_\infty+2\kappa \Big )e^{4\kappa T}\le \frac 12 m\kappa \Big(\frac{\|\Omega^0\|_\infty}{\kappa}+\frac{\|\mathcal{V}\|_\infty}{\kappa}+2 \Big )e^{4\kappa T}<\frac{b}{2}(a+c+2)\exp\left(\frac{2\pi}{\sin\alpha(1+\cos\alpha)}\right),
\]
we have that
\[
\Theta(m,T)\in (-2\alpha^\infty,2\alpha^\infty)^N.
\]

\subsection{Step 3. Oscillator death criterion}

Set $\mathcal{A}=\mathcal{B}=[N]$, $\eta=1$, $t_0=T$, $\rho=2-\varepsilon$, $x=\cos(2\alpha^\infty)$ in Proposition \ref{prop:pod}. Since $\cos\theta_i(T)>\cos(2\alpha^\infty)=x$ for $i\in [N]$, and
\[
\frac{1}{N}\sum_{i\in [N]}(1+\cos\theta_i(T))>1+\cos(2\alpha^\infty)=1+x>\frac{(2-\varepsilon/2)^2}{1+|x|}>\frac{2\rho}{1+|x|}
\]
(the second inequality following from $x=\cos(2\alpha^\infty)>1-\frac\varepsilon 2>0$), equation  \eqref{eq:pod-init} is satisfied. So we have that, as long as \eqref{eq:xbound} holds:
	\begin{equation}\label{eq:verify-0}
	\sin(2\alpha^\infty)\ge \frac{\|\Omega^0\|_\infty e^{-1}+\|\mathcal V\|_\infty+2m\kappa\|\Omega^0\|_\infty e^{-1}+2m\kappa M_F}{\kappa\rho(1-e^{-1})}
\end{equation}
then oscillator death holds and $R(m,t)\ge \rho = 2-\varepsilon$, as desired.

To verify \eqref{eq:verify-0}, we note that its right-hand side is bounded via \eqref{eq:qual-abc} by
\begin{align*}
&\frac{\|\Omega^0\|_\infty e^{-1}+\|\mathcal V\|_\infty+2m\kappa\|\Omega^0\|_\infty e^{-1}+2m\kappa M_F}{\kappa\rho(1-e^{-1})}\\
&\le \frac{(\|\Omega^0\|_\infty/\kappa) e^{-1}+(\|\mathcal V\|_\infty/\kappa)+2b(\|\Omega^0\|_\infty/\kappa) e^{-1}+2b (\|\mathcal{V}\|_\infty/\kappa+2)}{(2-\varepsilon)(1-e^{-1})}\\
&\le \frac{c e^{-1}+a+2bc e^{-1}+2b (a+2)}{(2-\varepsilon)(1-e^{-1})}
\end{align*}
and can be made less than $\sin (2\alpha^\infty)$ by taking $a,b,c>0$ small depending on $\alpha$.\qed

\begin{remark}
Since $\theta_i(t)\in [-2\alpha^\infty,2\alpha^\infty]$ for $t\ge T$, if $\alpha$ is close enough to $\pi$ such that $2\alpha^\infty<\alpha$, then the system \eqref{Winfree} converges to the unique equilibrium given by Theorem \ref{thm:HPR}.
\end{remark}

\section{Comments on volumetric and Lyapunov functional arguments}\label{sec:volume}

In the first-order model \eqref{GenWinfree}-\eqref{standard}, it is known that for Lebesgue-a.e.~initial data $\Theta^0$, $\kappa>2\|\mathcal{V}\|_\infty$ guarantees oscillator death \cite[Theorem 1]{ryoo2026oscillator}. There, the proof was to first establish a bound on the pathwise critical coupling strength \cite[Corollary 16]{ryoo2026oscillator}, as done in Theorem \ref{thm:pathwise}, and then to invoke volumetric arguments regarding the divergence of the vector field defining the first-order Winfree model. We were unable to replicate this result in the inertial Winfree model \eqref{Winfree} and will describe some possible approaches.

The first-order Winfree model \eqref{GenWinfree}--\eqref{standard} can be viewed as a dynamical system on the compact torus $\mathbb{T}^N$. Likewise, the inertial Winfree model~\eqref{Winfree} can be viewed as a dynamical system on the compact phase space given by the
cylinder
\[
\mathcal M\ \coloneqq\ \T^N\times\overline{B_\lambda},\qquad
\overline{B_\lambda}\coloneqq\{p\in\R^N:\|p\|_\infty\le\lambda\},\qquad
\lambda\coloneqq\|\Omega^0\|_\infty+\|\mathcal V\|_\infty+2\kappa,
\]
to which the flow of~\eqref{Winfree} descends in the $\theta$-variables
and which is invariant in the $p$-variables by the speed-limit
Lemma~\ref{lem:speedlim}.

\subsection{A volumetric argument}
\label{subsec:volume}

The first-order model \eqref{GenWinfree}--\eqref{standard} gives the flow of the vector field
\[
X\ = \sum_{i=1}^N\bigl(\nu_i-\kappa R(\theta)\sin\theta_i\bigr)\partial_{\theta_i},
\]
the divergence of which is
\[
\operatorname{div} X = \kappa\left(NR(1-R)+\frac 1N \sum_{i=1}^N \sin^2\theta_i\right) \ge \kappa NR(1-R).
\]
So the flow expands in the region $R<1$. Therefore, excluding a measure-zero set of initial data, the flow must eventually reach a state at which $R\ge 1$, at which we can use the pathwise oscillator death theorem \cite[Corollary 16]{ryoo2026oscillator}. This gives \cite[Theorem 1]{ryoo2026oscillator}.

However, for the second-order model \eqref{Winfree}, on the phase space $\mathbb T^N\times\R^N$ the vector field is given by
\[
X\ =\ \sum_{i=1}^N p_i\,\partial_{\theta_i}
\ +\ \sum_{i=1}^N\frac{1}{m}\bigl(-p_i+\nu_i-\kappa R(\theta)\sin\theta_i\bigr)\partial_{p_i},
\]
where  $p_i\coloneqq\dot\theta_i$, the divergence of which is $-\frac{N}{m}$. Since this is nowhere positive, we can only see that the flow is contractive. Thus we cannot deduce instability of states with small order parameter; it might even by contracting onto a smaller-dimensional submanifold. One may imagine modifying the volume form or Riemannian metric to obtain a better divergence, then restrict to the compact cylinder $\mathcal M\ \coloneqq\ \T^N\times\overline{B_\lambda}$ to prove almost-sure convergence results. 

We provide one obstruction: we show that we cannot work with volume forms depending only on $R$ that give divergences that depend only on $R$, except for the trivial case of Lebesgue measure with constant negative divergence.

\begin{lemma}[Obstruction for $R$-only volume forms]\label{lem:vol-obstr}
	Let $\mu=h(R(\theta))\,d\theta\wedge dp$ with $h\in C^1(\R_{\ge 0};\R_{>0})$.
	Then
	\begin{equation}\label{eq:divR-only}
		\operatorname{div}_\mu X
		\ =\ -\frac{N}{m}\ -\ \frac{h'(R)}{h(R)}\,S(\theta,p),
		\qquad
		S(\theta,p)\coloneqq\frac{1}{N}\sum_{i=1}^N p_i\sin\theta_i.
	\end{equation}
	In particular, $\operatorname{div}_\mu X$ is a function of $R$ alone
	if and only if $h$ is constant, in which case $\operatorname{div}_\mu X\equiv-N/m$.
\end{lemma}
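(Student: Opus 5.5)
The plan is to compute the divergence from the standard formula for a weighted volume form, and then to isolate the $p$-dependence of the extra term. For a density $\mu=h\,\mathrm{vol}$, where $\mathrm{vol}=d\theta\wedge dp$ is Lebesgue measure, I will use
\[
\operatorname{div}_\mu X=\frac{1}{h}\operatorname{div}(hX)=\operatorname{div}X+X(\log h).
\]
This follows from $\mathcal L_X(h\,\mathrm{vol})=(X h)\,\mathrm{vol}+h\,\mathcal L_X\mathrm{vol}$. I have already computed $\operatorname{div}X=-N/m$: only the $\partial_{p_i}$-components contribute, each giving $-1/m$, since the $\partial_{\theta_i}$-coefficients $p_i$ do not depend on $\theta$.

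Next I compute $X(\log h(R(\theta)))$. Since $h\circ R$ depends only on $\theta$, only the $\sum_i p_i\partial_{\theta_i}$ part of $X$ acts on it. Using $\partial_{\theta_i}R=-\frac1N\sin\theta_i$, this gives
\[
X(\log h\circ R)=\frac{h'(R)}{h(R)}\sum_i p_i\Bigl(-\frac1N\sin\theta_i\Bigr)=-\frac{h'(R)}{h(R)}S(\theta,p),
\]
which establishes \eqref{eq:divR-only}.

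For the characterization, the direction ``$h$ constant $\Rightarrow$ divergence $\equiv -N/m$'' is immediate. For the converse, suppose $\operatorname{div}_\mu X=g(R(\theta))$ for some function $g$. The left-hand side of \eqref{eq:divR-only} is then independent of $p$, while $S$ is linear in $p$. Fix $\theta$ and differentiate in $p_i$: this gives $\frac{h'(R)}{h(R)}\cdot\frac1N\sin\theta_i=0$ for all $i$. Every value $r\in(0,2)$ is attained by $R$ at some $\theta$ with some $\sin\theta_i\neq0$; for instance, take all $\theta_j$ equal to a common $\vartheta\in(0,\pi)$ with $1+\cos\vartheta=r$. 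Hence $h'(r)=0$ on $(0,2)$. The range of $R$ is $[0,2]$, so $h$ is constant on the relevant range. By continuity of $h'$ this extends to the closed interval, and $h$ is constant on $[0,2]$.

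The only subtle point is the domain. The hypothesis defines $h$ on $\R_{\ge0}$, but only the values of $h$ on $[0,2]$ enter, so ``$h$ constant'' should be read on the range of $R$. I would state this explicitly in the proof.
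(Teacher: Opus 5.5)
Your proof is correct and follows the paper's route. The identity $\operatorname{div}_\mu X=\operatorname{div}X+X(\log h)$ is the paper's $\frac1h\nabla\cdot(hX)$ expanded, and your converse uses the same observation that $S(\theta,\cdot)$ is a nonzero linear function of $p$ whenever some $\sin\theta_i\neq0$. The paper leaves two details implicit that you supply: you explicitly realize every $r\in(0,2)$ by such a $\theta$, and you note that $h$ is only determined on the range $[0,2]$ of $R$, so ``constant'' must be read there. Together these make the ``only if'' direction precise.
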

\begin{proof}
	For a volume form $\mu=h\,d\theta\wedge dp$ with $h>0$, one has
	$\operatorname{div}_\mu X=\tfrac{1}{h}\,\nabla\!\cdot\!(hX)$, where
	$\nabla\!\cdot$ is the Euclidean divergence in the coordinates
	$(\theta,p)$. Since $h(R)$ is independent of $p$,
	\[
	\sum_{i=1}^N\partial_{p_i}\!\Bigl[h(R)\cdot\tfrac{1}{m}(-p_i+\nu_i-\kappa R\sin\theta_i)\Bigr]
	\ =\ -\frac{N}{m}h(R).
	\]
	For the $\theta$-derivatives,
	$\partial_{\theta_j}R=-\tfrac{1}{N}\sin\theta_j$, and so
	\[
	\sum_{i=1}^N\partial_{\theta_i}\bigl[h(R)p_i\bigr]
	\ =\ \sum_{i=1}^N p_i\,h'(R)\,\partial_{\theta_i}R
	\ =\ -h'(R)\,S(\theta,p).
	\]
	Dividing by $h(R)$ gives~\eqref{eq:divR-only}. The last assertion is
	immediate: for fixed $\theta$ with some $\sin\theta_{j}\neq 0$, the
	map $p\mapsto S(\theta,p)$ is a nonzero linear function of $p$, so
	\eqref{eq:divR-only} depends only on $R$ iff $h'(R)/h(R)\equiv 0$, i.e.\
	$h$ is constant.
\end{proof}
Motivated by the first-order case, one might ask if $\operatorname{div}_\mu X$ could be \emph{lower bounded} by a function of $R$ alone. However, this might not be possible if the dynamics of \eqref{Winfree} degenerate onto a lower-dimensional submanifold.

\subsection{A conjectural Lyapunov functional}

On the compact, real-analytic manifold with boundary $\mathcal{M}$, if a Lyapunov functional, i.e., a function 
$\mathcal L\colon\mathcal M\to\R$ satisfying
\begin{equation}\label{eq:lyap-strong}
	\frac{d}{dt}\mathcal L(\Theta(t),\dot\Theta(t))\le 0,\qquad
	\frac{d}{dt}\mathcal L=0\iff(\Theta,\dot\Theta)\in\mathcal S,
\end{equation}
existed, then we might be able to obtain a LaSalle-type convergence result via the Haraux--Jendoubi variant of \L ojasiewicz's inequality
(Proposition~\ref{prop:lojasiewicz}), if, say, the function $\mathcal{L}$ satisfied the stronger condition
\begin{equation}\label{eq:lyap-stronger}
-C_1\|\dot\Theta(t)\|^2\le\frac{d}{dt}\mathcal L(\Theta(t),\dot\Theta(t))\le -C_2\|\dot\Theta(t)\|^2
\end{equation}
for some universal constants $C_1,C_2>0$.

\begin{question}[Refined Lyapunov conjecture]\label{conj:lyapunov}
	Denote by $\kappa_c(\mathcal V)$ the critical coupling strength of
	\eqref{Winfree} (as defined
	in~\cite{ryoo2026oscillator}\footnote{This is the coupling strength
		above which phase-locked states exist for the first-order Winfree
		model \eqref{GenWinfree}--\eqref{standard}.}). For every
	$\kappa>\kappa_c(\mathcal V)$, does there exist a function $\mathcal{L}:\mathcal{M}\to \mathbb{R}$ satisfying \eqref{eq:lyap-stronger}?
\end{question}

We remark that the well-known potential  $P(\Theta)=-\sum\nu_k\theta_k-\tfrac{\kappa N}{2}R^2$
of Proposition~\ref{prop:lojasiewicz} is \emph{not} a function on
$\T^N$, because its linear piece has periods $-2\pi\nu_i$ around the
fundamental loops of the torus. We must therefore search for a
Lyapunov functional that is intrinsically torus-periodic, i.e.\ built
from $(\cos\theta,\sin\theta,p)$ alone.

\section{Concluding remarks and open problems}\label{sec:conclusion}

We have proved two synchronization theorems for the second-order
Winfree model: a pathwise oscillator-death theorem
(Theorem~\ref{thm:pathwise}) with explicit $R_0^{3/2}$ scaling, and a
qualitative zero-inertia synchronization theorem
(Theorem~\ref{thm:qualitative_mto0}), both complementary to the
first-order Winfree results of~\cite{ryoo2026oscillator} and the
inertial Kuramoto results
of~\cite{cho2025quantitative,cho2025inertia}. Along the way we proved
a quantitative higher-order Tikhonov theorem
(Proposition~\ref{prop:mto0quant}). We close with a short list of open
problems.

\begin{enumerate}
\item \textbf{$R_0$-independent pathwise constants.}
The constants
$a=\frac{1}{50}$, $b=\frac{1}{80}$, $c=\frac{1}{20}$ of Theorem~\ref{thm:pathwise} are certainly
not sharp. More importantly, the $R_0^{3/2}$-dependence is limiting, as Theorem \ref{thm:pathwise} does not give a uniform bound that works for all initial data. We pose the following conjecture.
\begin{conjecture}\label{conj:weak}
	There exist absolute constants $a,b,c>0$ with the following property. For any initial data
	$(\{\theta_i^0\}_{i=1}^N,\{\omega_i^0\}_{i=1}^N)$ and system parameters $(\{\nu_i\}_{i=1}^N,\kappa,m)$
	satisfying 
	\begin{equation}
		\frac{\|\mathcal{V}\|_\infty}{\kappa}<a,\qquad
		m\kappa<b,\qquad
		\frac{\|\Omega^0\|_\infty}{\kappa}<c,
	\end{equation}
	the solution $\Theta$ to \eqref{Winfree} exhibits oscillator death, i.e., for every $i\in[N]$, the limits
	$\theta_i^\infty:=\lim_{t\to\infty}\theta_i(t)$ and
	$\lim_{t\to\infty}\dot\theta_i(t)=0$ exist.
\end{conjecture}

\item \textbf{Large inertia and large initial velocity.}
More ambitiously, we pose the following conjecture.
\begin{conjecture}\label{conj:sharpest}
	Given intrinsic velocities $\mathcal{V}\in \mathbb{R}^N$, denote by $\kappa_c(\mathcal{V})$ the critical coupling strength of \eqref{Winfree} computed in \cite[Proposition 63]{ryoo2026oscillator}. Let $\kappa\ge \kappa_c(\mathcal{V})$. Then, for any initial data $(\Theta^0,\Omega^0)\in\R^{2N}$, and any positive inertia $m>0$, the solution $\Theta(t)$ to \eqref{Winfree} exhibits oscillator death.
\end{conjecture}
Of course, Conjecture \ref{conj:sharpest} would imply Conjecture \ref{conj:weak}. Conjecture \ref{conj:sharpest}, if true, would imply that the $m$ does not play a role in asymptotic synchronization; it could, however, affect the effective time required to reach a synchronized state.

In Theorem \ref{thm:pathwise}, the bound on $a$ is needed because the computation of the critical coupling strength $\kappa_c(\mathcal{V})$ in  \cite[Proposition 63]{ryoo2026oscillator} gives $\frac 12 \|\mathcal{V}\|_\infty\le \kappa_c(\mathcal{V})\le \frac{4}{3\sqrt{3}} \|\mathcal{V}\|_\infty$. However, it is unclear what the correct bounds on $b$ and $c$ are, or if they are needed at all. At least for the inertia, numerical results for the inertial Kuramoto model \eqref{eq:Kur-embed}, which carry over to the Winfree model via the embedding of Lemma \ref{lem:embedding}, suggest conflicting results, with some suggesting that low inertia promotes synchronization \cite{dorfler2012synchronization}, while others suggest that low inertia destabilizes  \cite{alberto2000required,chu2010boundary}.

This paper proves that synchronization occurs in the low inertia regime; proving or disproving synchronization in the high inertia regime would require genuinely new tools.
 A sharp pathwise theorem, i.e., one that matches
numerical experiments, such as that of Conjecture \ref{conj:sharpest}, remains to be proved.

\item \textbf{Lyapunov functional.} One way to attack Conjectures \ref{conj:weak} and \ref{conj:sharpest} is to develop a theory of Lyapunov functionals for the Winfree model \eqref{Winfree}. In this direction, we ask whether Question~\ref{conj:lyapunov} has an affirmative answer.
\end{enumerate}

\bigskip
\noindent {\bf Competing interests.} The authors have no relevant financial or non-financial interests to disclose.

\bigskip
\noindent {\bf Data availability statement.} We do not analyze or generate any datasets, because our work proceeds within a theoretical and mathematical approach. One can obtain the relevant materials from the references below.

\bibliographystyle{alpha}
\bibliography{bib}

\end{document}